\let\OLDthebibliography\thebibliography
\renewcommand\thebibliography[1]{
  \OLDthebibliography{#1}
  \setlength{\parskip}{0pt}
  \setlength{\itemsep}{0pt plus 0.3ex} }
\numberwithin{equation}{section}
\theoremstyle{plain}
\newtheorem{theorem}{Theorem}[section]
\newtheorem{lemma}[theorem]{Lemma}
\theoremstyle{definition}
\newenvironment{remark}{\pushQED{\qed} \remarkbase}{\popQED\endremarkbase}
\newcommand{\R}{{\mathbb R}}
\newcommand{\Z}{\mathbb Z}
\newcommand{\Q}{\mathbb Q}
\newcommand{\T}{{\mathbb T}}
\newcommand{\mA}{\mathcal{A}}
\newcommand{\mB}{\mathcal{B}}
\newcommand{\mC}{\mathcal{C}}
\newcommand{\mD}{\mathcal{D}}
\newcommand{\mF}{\mathcal{F}}
\newcommand{\mH}{\mathcal{H}}
\newcommand{\mK}{\mathcal{K}}
\newcommand{\mN}{\mathcal{N}}
\newcommand{\mP}{\mathcal{P}}
\newcommand{\mR}{\mathcal{R}}
\newcommand{\mS}{\mathcal{S}}
\newcommand{\mT}{\mathcal{T}}
\newcommand{\mU}{\mathcal{U}}
\renewcommand{\a}{\alpha}
\renewcommand{\b}{\beta}
\newcommand{\g}{\gamma}
\renewcommand{\d}{\delta}
\newcommand{\e}{\varepsilon}
\newcommand{\ph}{\varphi}
\newcommand{\lm}{\lambda}
\newcommand{\Om}{\Omega}
\newcommand{\om}{\omega}
\newcommand{\s}{\sigma}
\renewcommand{\th}{\vartheta}
\newcommand{\la}{\langle}
\newcommand{\ra}{\rangle}
\newcommand{\pa}{\partial}
\renewcommand{\div}{\mathrm{div}\,}
\newcommand{\grad}{\nabla}
\newcommand{\bcb}{\begin{color}{blue}}
\newcommand{\bcr}{\begin{color}{red}}
\newcommand{\bcg}{\begin{color}{green}}
\newcommand{\ec}{\end{color}}
\title{Nearly toroidal, periodic and quasi-periodic motions \\ 
of fluid particles driven by the Gavrilov solutions \\ 
of the Euler equations}
\author{\normalsize{Pietro Baldi}} 
\date{} 
\begin{document}

\maketitle

\textbf{Abstract.} 
We consider the smooth, compactly supported solutions 
of the steady 3D Euler equations of incompressible fluids contructed by Gavrilov in 2019, 
and we study the corresponding fluid particle dynamics. 
This is an \textsc{ode} analysis, which contributes to the description of Gavrilov's vector field.

\begin{small}
\tableofcontents
\end{small}

\section{Introduction and main result}

In the remarkable paper \cite{Gav}, Gavrilov proved the existence
of a nontrivial solution of $C^\infty$ class, 
with compact support, of the steady Euler equations 
of incompressible fluids in $\R^3$. 
The result in \cite{Gav} is important and surprising, 
because previously, 
on the basis of some negative partial results, 
it was conjectured that compactly supported, nontrivial, smooth solutions 
of the 3D steady Euler equations cannot exist:
see the clear explanation at the beginning of \cite{Constantin.ecc}
and the general discussion about existence 
of compactly supported smooth solutions in \cite{Peralta-Lioville}.
In addition, another reason of interest for the fruitful construction of \cite{Gav} is that 
recently it has been used as a building block to produce other interesting solutions, 
both stationary and time-dependent, of the Euler equations of fluid dynamics,  
see Section \ref{subsec:lit} below. 

\medskip

Now suppose that a fluid moves according 
to the Gavrilov solution of the Euler equations, 
that is, suppose that the fluid particles are driven by Gavrilov's velocity vector field.
Which movement of the fluid do we observe? 
Of course the particles outside the support of the vector field do not move at all, 
but how do they move in the region where the field is nonzero? 

\medskip

In the present paper we deal with this question. 
It turns out that every fluid particle travels along a trajectory  
that lies on a nearly toroidal surface, which is a level set of the pressure. 
The motion of every fluid particle is periodic or quasi-periodic in time; 
we prove that there are both periodic and quasi-periodic motions, 
and the value of the pressure 
determines whether the trajectories on its level set  
are all periodic or all quasi-periodic. 

In fact, the system of differential equations in $\R^3$ 
describing the motion of the fluid particles turns out to be 
integrable, as it can be transformed into the system of 
a Hamiltonian system of one degree of freedom 
and a third equation that can be directly solved by integration.
We write the Hamiltonian system in angle-action coordinates $(\s, I)$, 
and prove that there exists a change of variables 
on a neighborhood of the support of Gavrilov's vector field 
such that the equations of motion in the new coordinates $(\s, \b, I)$ 
becomes 
\[
\dot \s = \Om_1(I), \quad \ 
\dot \b = \Om_2(I), \quad \ 
\dot I = 0,
\]
where $\s$ and $\b$ are angle variables rotating with constant angular velocities 
$\Om_1(I)$, $\Om_2(I)$, and $I$ is a constant action variable, 
which is, in fact, a reparametrization of the pressure. 
The full statement is in Theorem \ref{thm:main}.

\subsection{The Gavrilov solutions of the steady Euler equations}
\label{subsec:Gav.sol}

In the main part of the construction in \cite{Gav}, 
given any $R > 0$, the circle 
\begin{equation*} 
\mC := \{ (x,y,z) \in \R^3 : x^2 + y^2 = R^2, \ z = 0 \}
\end{equation*}
in $\R^3$ is considered, and, in an open neighborhood $\mN$ of $\mC$, 
two functions $U,P$ are defined, 
$U : \mN \to \R^3$ and $P : \mN \to \R$, 
both in $C^\infty(\mN \setminus \mC)$,
solving the steady Euler equations 
\begin{equation} \label{Euler}
U \cdot \grad U + \grad P = 0, 
\quad \ 
\div U = 0
\end{equation} 
in $\mN \setminus \mC$, 
together with the fundamental ``localizability condition'' 
\begin{equation} \label{loc} 
U \cdot \grad P = 0.
\end{equation} 
As a final step of the proof,  
the functions $U,P$ are multiplied by smooth cut-off functions 
to obtain $C^\infty(\R^3)$ functions $\tilde U, \tilde P$, 
where $\tilde U$ and $\grad \tilde P$ have compact support 
contained in $\mN \setminus \mC$, solving \eqref{Euler} (and also \eqref{loc}) in $\R^3$. 

Let us be more precise. Denote $\rho := \sqrt{x^2 + y^2}$.  
For $\d \in (0,R)$, let 
\begin{equation} \label{def.mN}
\mN = \{ (x,y,z) \in \R^3 : (\rho - R)^2 + z^2 < \delta^2 \}.
\end{equation}
In $\mN$, the solution $(U,P)$ of \cite{Gav} is given by
\begin{equation}  \label{def.U.P}
U(x,y,z) = 
u_\rho(\rho, z) e_\rho(x,y) 
+ u_\ph(\rho, z) e_\ph(x,y) 
+ u_z(\rho, z) e_z,
\quad P(x,y,z) = p(\rho,z),
\end{equation}
where 
\begin{alignat}{3} 
e_\rho(x,y) & = \frac{1}{\rho} (x, y, 0), & \quad \ 
e_\ph(x,y) & = \frac{1}{\rho} (-y, x, 0), & \quad \ 
e_z & = (0,0,1),
\notag \\ 
u_\rho(\rho,z) & = \frac{ \pa_z p(\rho,z) }{ \rho }, & \quad \ 
u_\ph(\rho,z) & = \frac{b(\rho,z)}{\rho}, & \quad \ 
u_z(\rho,z) & = - \frac{ \pa_\rho p(\rho,z) }{ \rho },  
\notag \\  
b(\rho,z) & = \frac{R^3}{4} \sqrt{H(a(\rho,z))}, & \quad \ 
p(\rho,z) & = \frac{R^4}{4} a(\rho,z), & \quad \ 
a(\rho,z) & = \a \Big( \frac{\rho}{R}, \frac{z}{R} \Big),
\label{Gav.a.b.p}
\end{alignat}
and $\a,H$ are functions defined in \cite{Gav} 
in terms of solutions of certain differential equations; 
$H(0) = 0$, and $H$ is analytic in a neighborhood of $0$;
$\a$ has a strict local minimum at $(1,0)$, 
with $\a(1,0) = 0$, 
and it is analytic in a neighborhood of $(1,0)$. 
Hence $\a$ and $H \circ \a$ are both well-defined and analytic 
in a disc of $\R^2$ of center $(1,0)$ and radius $r_0$, 
for some universal constant $r_0 > 0$ 
(where ``universal'' means that $r_0$ does not depend on anything). 
If $\d$ in \eqref{def.mN} satisfies 
\begin{equation*} 
\delta \leq r_0 R,
\end{equation*}
then $a(\rho,z)$ and $H(a(\rho,z))$, 
where $\rho = \sqrt{x^2 + y^2}$, 
are well-defined and analytic in $\mN$
(note that, in $\mN$, one has 
$0 < R-\delta < \rho < R+\delta$; 
in particular, $\rho$ is bounded away from zero). 
Also, $b(\rho,z)$ is well-defined and continuous in $\mN$, 
and (because of the square root $\sqrt{H}$) 
it is analytic in $\mN \setminus \mC$. 
Hence $P$ is analytic in $\mN$, 
while $U$ is continuous in $\mN$ 
and analytic in $\mN \setminus \mC$.

The solution $(\tilde U, \tilde P)$ in \cite{Gav} 
is defined in $\mN$ as 
\begin{equation}  \label{def.tilde.U.tilde.P}
\tilde U(x,y,z) = \om(P(x,y,z)) U(x,y,z), 
\quad \ 
\tilde P(x,y,z) = W(P(x,y,z)),
\end{equation}
where $\om : \R \to \R$ is any $C^\infty$ function vanishing outside 
the interval $[\e, 2 \e]$, with $\e > 0$ small enough, 
and $W : \R \to \R$ is a primitive of $\om^2$; for example, 
\begin{equation} \label{def.W}
W(s) = \int_0^s \om^2(\s) \, d\s.
\end{equation} 
Then $(\tilde U, \tilde P)$ is extended to $\R^3$ 
by defining 
$(\tilde U, \tilde P) = (0, W(2\e))$ in $\R^3 \setminus \mN$. 

Note that $\tilde U$ and $\grad \tilde P$ can be nonzero only in the set 
\begin{equation}  \label{def.mS}
\mS := \{ (x,y,z) \in \mN : \e < P(x,y,z) < 2 \e \},
\end{equation}
and, if $\om(s)$ is nonzero at some $s \in (\e, 2\e)$, 
then both $\tilde U$ and $\grad \tilde P$ are actually nonzero 
at the corresponding points in $\mS$. 
Moreover, $P = 0$ on the circle $\mC$, 
and, for $\e$ small enough, 
$P > 3 \e$ at all points of $\mN$ sufficiently far from $\mC$;
more precisely, to fix the details of the construction, 
we introduce a parameter $\tau > 0$ and assume that 
\begin{equation} \label{P.larger}
P(x,y,z) > \tau 
\quad \ \forall (x,y,z) \in \mN \setminus \mN'
\end{equation}
and $\tau \geq 3 \e$, where
\begin{equation*} 
\mN' := \{ (x,y,z) \in \R^3 : (\rho-R)^2 + z^2 < (\d/4)^2 \}.
\end{equation*}
Thus, the closure of $\mS$ is contained in the open set 
\begin{equation}  \label{def.mS.star}
\mS^* := \{ (x,y,z) \in \mN : 0 < P(x,y,z) < \tau \},
\end{equation}
and $\mS^* \subseteq (\mN' \setminus \mC)
\subseteq (\mN \setminus \mC)$.

\subsection{Main result: description of the fluid particle dynamics} 

A preliminary, basic observation regarding the solutions of Section \ref{subsec:Gav.sol}
is that any such solution with a given $R>0$ can be obtained, 
by rescaling, from another one having $R=1$. 
Even more, we show that Gavrilov's solutions are a 1-parameter subset 
of a 2-parameter family of solutions, where $R$ plays a dual role 
related to two different scaling invariances of the Euler equations, 
both preserving the localizability condition \eqref{loc}. 
These basic observations are in Section \ref{sec:dual role} 
(see Lemma \ref{lemma:fam}). 
Thanks to these properties, we study the motion of the fluid particles 
in the normalized case $R=1$; the motion for any other $R>0$ 
is immediately obtained by rescaling the amplitude and the time variable, 
as explained in Lemma \ref{lemma:ode.R}.

\medskip

To study the motion of the fluid particles driven 
by the Gavrilov vector field $\tilde U$ 
means to study the solutions $\R \to \R^3$, $t \mapsto (x(t), y(t), z(t))$ 
of the system
\begin{equation}  \label{ode.xyz}
(\dot x(t), \dot y(t), \dot z(t)) = \tilde U(x(t), y(t), z(t)),
\end{equation}
which is an autonomous \textsc{ode} in $\R^3$. 
The dot above a function denotes its time derivative. 
Before dealing with system \eqref{ode.xyz}, 
we recall some definitions about quasi-periodic functions. 

\medskip

A vector $\Om = (\Om_1, \ldots, \Om_n) \in \R^n$, $n \geq 1$, 
is said to be \emph{rationally independent} if 
$\Om \cdot k = \Om_1 k_1 + \ldots + \Om_n k_n$ 
is nonzero for all integer vectors $k = (k_1, \ldots, k_n) \in \Z^n \setminus \{ 0 \}$. 

Given a set $X$, a function $v : \R \to X$, $t \mapsto v(t)$ is said to be 
\emph{quasi-periodic with frequency vector} 
$\Om = (\Om_1, \ldots, \Om_n) \in \R^n$, $n \geq 2$, 
if $\Om$ is rationally independent  
and there exists a function 
$w : \R^n \to X$, $2\pi$-periodic in each real variable, 
such that $v(t) = w(\Om_1 t, \ldots, \Om_n t)$ for all $t \in \R$.
Moreover, to ensure that the number $n$ is not higher than necessary, 
we add the condition that there does not exist any vector 
$\tilde \Om = (\tilde \Om_1, \ldots, \tilde \Om_m) \in \R^m$, 
with $m < n$, 
and any function $\tilde w : \R^m \to X$, 
$2\pi$-periodic in each real variable, 
such that $v(t) = \tilde w (\tilde \Om_1 t, \ldots, \tilde \Om_m t)$ 
for all $t \in \R$. 

For example, for $X=\R$, $n=3$, and $\Om = (\Om_1, \Om_2, \Om_3) \in \R^3$ 
a rationally independent vector, 
if $w( \th_1, \th_2, \th_3) = \cos(\th_1 + \th_2) + \cos(\th_3)$, 
then $n=3$ is not minimal, as it can be reduced to $n=2$ by taking 
$\tilde w(\th_1, \th_2) = \cos(\th_1) + \cos(\th_2)$, 
$\tilde \Om = (\tilde \Om_1, \tilde \Om_2) \in \R^2$, 
with $\tilde \Om_1 = \Om_1 + \Om_2$ and $\tilde \Om_2 = \Om_3$, 
while $n=2$ cannot be further reduced. 
Hence the function $v(t) = \tilde w(\tilde \Om_1 t, \tilde \Om_2 t) 
= \cos(\tilde \Om_1 t) + \cos (\tilde \Om_2 t)$ 
is quasi-periodic with frequency vector $\tilde \Om \in \R^2$. 

For $n=2$, a vector $\Om = (\Om_1, \Om_2) \in \R^2$ is rationally independent 
if and only if $\Om_1$ is nonzero and the ratio $\Om_2 / \Om_1$ is irrational. 
Hence a function $v(t) = w(\Om_1 t, \Om_2 t)$ is quasi-periodic 
with frequency vector $\Om = (\Om_1, \Om_2)$ 
if $w(\th_1, \th_2)$ is $2\pi$-periodic in $\th_1$ and in $\th_2$, 
$\Om_1$ is nonzero,
$\Om_2 / \Om_1$ is irrational, 
and $v(t)$ is not a periodic function.

\medskip

The main result of this paper is the following description 
of Gavrilov's fluid particle dynamics.

\begin{theorem} \label{thm:main}
There exist universal positive constants $\d_0, \tau_0, \e_0, I^*$ with the following properties.
Let 
\[
\mC, \d, \mN, U, P, \e, \om, W, \tilde U, \tilde P, \mS, \tau, \mN', \mS^*
\] 
be the sets, constants, and functions 
defined in Section \ref{subsec:Gav.sol} for $R=1$, with 
$\d = \d_0$, $\tau = \tau_0$, and $0 < \e \leq \e_0$.  

\medskip

$(i)$ There exists an analytic diffeomorphism 
\begin{equation*} 
\Phi : \T \times \T \times (0, I^*) \to \mS^*, 
\end{equation*}
where $\T := \R / 2 \pi \Z$, such that 
the change of variable $(x,y,z) = \Phi(\s,\b,I)$ 
transforms system \eqref{ode.xyz} into a system of the form
\begin{equation}  \label{trans.syst.nel.teorema}
\dot \s = \Om_1(I), \quad \ 
\dot \b = \Om_2(I), \quad \ 
\dot I = 0.
\end{equation}
As a consequence, the solution $(x(t), y(t), z(t))$ 
of the Cauchy problem \eqref{ode.xyz} with initial datum 
\begin{equation}  \label{init.data.nel.teorema}
(x(0), y(0), z(0)) = (x_0, y_0, z_0) = \Phi(\s_0, \b_0, I_0) \in \mS^*
\end{equation}
is the function 
\begin{equation}  \label{sol.xyz.nel.teorema}
(x(t), y(t), z(t)) = \Phi( \s(t), \b(t), I(t)),  
\end{equation}
defined for all $t \in \R$, where 
\begin{equation}  \label{sol.good.nel.teorema}
\s(t) = \s_0 + \Om_1(I_0) t, \quad \ 
\b(t) = \b_0 + \Om_2(I_0) t, \quad \ 
I(t) = I_0.
\end{equation}
The angle variables $\s(t), \b(t) \in \T$ 
rotate with constant angular frequencies $\Om_1(I_0), \Om_2(I_0)$ respectively, 
and the variable $I(t) = I_0$ is constant in time. 

\medskip

$(ii)$ 
The first and third equations of the transformed system \eqref{trans.syst.nel.teorema}
form a Hamiltonian system 
\begin{equation}  \label{Ham.syst.nel.teorema}
\dot \s = \pa_I \mH(\s, I), \quad \dot I = - \pa_\s \mH(\s, I)
\end{equation}
with Hamiltonian $\mH(\s, I) = \mH(I)$ independent of the angle variable $\s$; 
hence \eqref{Ham.syst.nel.teorema} is 
a Hamiltonian system in angle-action variables. 

\medskip

$(iii)$
The frequency $\Om_1(I)$ is given by 
\begin{equation*} 
\Om_1(I) = \om ( \mK(I) ) \, \mK'(I)
\end{equation*}
where $\mK(I)$ is the restriction to the interval $(0, I^*)$ of
a function defined and analytic in the interval $(-I^*, I^*)$, with Taylor expansion 
\[
\mK(I) = I + \frac{1065}{1024} I^3 + O(I^4)
\]
around $I=0$, and strictly increasing in $(-I^*, I^*)$. 
The frequency $\Om_2(I)$ is given by 
\begin{equation*} 
\Om_2(I) = \sqrt{I} \, \mR(I) \Om_1(I) 
\end{equation*}
where $\mR(I)$ is the restriction to the interval $(0, I^*)$ of
a function defined and analytic in the interval $(-I^*, I^*)$, 
with Taylor expansion 
\[
\mR(I) = 1 + \frac74 I + O(I^2)
\]
around $I=0$. 
If $\om( \mK(I)) \neq 0$, 
then both $\Om_1(I)$ and $\Om_2(I)$ are nonzero, with ratio 
\begin{equation*} 
\frac{\Om_2(I)}{\Om_1(I)} = \sqrt{I} \, \mR(I).
\end{equation*}

The function $I \mapsto \sqrt{I} \, \mR(I)$ is strictly increasing and analytic 
in $(0, I^*)$. Therefore it is 
rational for infinitely many values of $I$, 
and irrational for infinitely many other values of $I$. 
More precisely, 
the set $\{ I \in (0, I^*) : \sqrt{I} \, \mR(I) \in \Q \}$ is a countable set, 
while the set $\{ I \in (0,I^*) : \sqrt{I} \, \mR(I) \notin \Q \}$ 
has full Lebesgue measure.

\medskip

$(iv)$ 
For $\om( \mK(I_0) ) \neq 0$, 
the solution \eqref{sol.xyz.nel.teorema} 
of the Cauchy problem \eqref{ode.xyz}, \eqref{init.data.nel.teorema}
is periodic in time if $\sqrt{I_0}\, \mR(I_0)$ is rational, 
and it is quasi-periodic in time 
with frequency vector $(\Om_1(I_0), \Om_2(I_0))$ 
if $\sqrt{I_0}\, \mR(I_0)$ is irrational.

\medskip

$(v)$
The map $\Phi(\s,\b,I)$ admits a converging expansion in powers of $\sqrt{I}$ around $I=0$;
more precisely, there exists a map $\Psi(\s, \b, \mu)$, defined and analytic 
in $\T^2 \times (- \mu_0, \mu_0)$, where $\mu_0 = \sqrt{I^*}$, such that 
$\Phi(\s, \b, I) = \Psi(\s, \b, \sqrt{I})$ 
for all $(\s, \b, I) \in \T^2 \times (0, I^*)$.
The map $\Phi(\s, \b, I)$ has the form 
\[
\Phi(\s, \b, I) = 
\begin{pmatrix} 
\rho(\s, I) \cos(\b + \eta(\s,I)) \\ 
\rho(\s, I) \sin(\b + \eta(\s,I)) \\ 
\zeta(\s, I) 
\end{pmatrix}
\]
where the functions $\rho(\s,I), \eta(\s,I), \zeta(\s,I)$ have expansion
\[
\rho(\s,I) = 1 + \sqrt{2I} \sin \s + O(I), \quad \ 
\eta(\s, I) = O(I), \quad \ 
\zeta(\s, I) = \sqrt{2I} \cos \s + O(I).
\]

\medskip

$(vi)$ 
The action variable $I$ and the pressure $P$ in \eqref{def.U.P} are related by the identity 
\[
P(\Phi(\s, \b, I)) = \mK(I) 
\quad \ \forall (\s, \b, I) \in \T^2 \times (0, I^*).
\]
The pressure level and the action are in bijective correspondence; 
thus, the action is a reparametrization 
of the pressure. 
The frequencies $\Om_1(I), \Om_2(I)$ 
could also be expressed in terms of the pressure level. 
The pressure $\tilde P$ in \eqref{def.tilde.U.tilde.P} 
satisfies $\tilde P(\Phi(\s,\b,I)) = W( \mK(I) )$. 
 
\medskip

$(vii)$
The trajectory $\{ (x(t), y(t), z(t)) : t \in \R \}$ 
of the solution \eqref{sol.xyz.nel.teorema} lies in the level set 
\[
\mT_{\ell} = \{ (x,y,z) \in \R^3 : P(x,y,z) = \ell \}
\]
of the pressure, where the value $\ell = P(x_0, y_0, z_0)$ 
is determined by the initial datum \eqref{init.data.nel.teorema}. 
The level set $\mT_\ell$ has a nearly toroidal shape, 
because $P(x,y,z)$ is given by \eqref{def.U.P}, \eqref{Gav.a.b.p}, 
and 
\[
\a(\rho,z) = 2(\rho-1)^2 + 2 z^2 + O( ( |\rho-1| + |z| )^3 )
\]
around $(\rho,z) = (1,0)$. 
More precisely, $\mT_\ell$ is the diffeomorphic image 
\[
\mT_\ell = \Phi( \T^2 \times \{ I \} ) 
\]
of the 2-dimensional torus 
$\T^2 \times \{ I \} = \{ (\s, \b, I) : (\s , \b) \in \T^2 \}$,
where $I$ is determined by the identity 
\begin{equation} \label{ell.I.relation}
\ell = \mK(I). 
\end{equation}

The pressure level $\ell$ 
determines whether the solution \eqref{sol.xyz.nel.teorema}, \eqref{sol.good.nel.teorema}
of the Cauchy problem \eqref{ode.xyz} 
with initial datum \eqref{init.data.nel.teorema} 
on the surface $\mT_\ell$ is periodic or quasi-periodic, 
depending on the rationality/irrationality of the ratio 
$\Om_2(I) / \Om_1(I)$, 
where $I$ and $\ell$ are related by \eqref{ell.I.relation}. 
Different solutions of system \eqref{ode.xyz} 
lying on the same surface $\mT_\ell$, i.e., having the same pressure level, 
share the same frequencies $\Om_1(I), \Om_2(I)$ 
and, therefore, the same frequency ratio $\Om_2(I) / \Om_1(I)$.    
\end{theorem}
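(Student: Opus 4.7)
The plan is to exploit the cylindrical symmetry of Gavrilov's field. Setting $x = \rho\cos\varphi$, $y = \rho\sin\varphi$, the autonomous system \eqref{ode.xyz} reads
\begin{equation*}
\dot\rho = \frac{\om(p)}{\rho}\,\pa_z p, \qquad
\dot z = -\frac{\om(p)}{\rho}\,\pa_\rho p, \qquad
\dot\varphi = \om(p)\,\frac{b(\rho,z)}{\rho^2},
\end{equation*}
where $p = p(\rho, z)$. The first two equations decouple from $\varphi$ and manifestly preserve $p$, since the planar velocity $(\pa_z p, -\pa_\rho p)$ is orthogonal to $\grad p$. With $W_1$ any primitive of $\om$, this planar subsystem is Hamiltonian for the symplectic form $\rho\, d\rho \wedge dz$ and Hamiltonian $H_1 := W_1(p(\rho,z))$; this is the integrable structure underlying part $(ii)$.

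Since $\a(\rho, z) = 2(\rho - 1)^2 + 2z^2 + O((|\rho-1| + |z|)^3)$ has a nondegenerate analytic minimum at $(1, 0)$, the level sets $\{p = \ell\}$ are analytic simple closed curves for small $\ell > 0$, on which the $(\rho, z)$ motion is periodic. The Liouville--Arnold action-angle construction near this elliptic equilibrium then yields analytic coordinates $(\s, I) \in \T \times (0, I^*)$ in which the planar system becomes $\dot\s = \Om_1(I)$, $\dot I = 0$; in these coordinates $p = \mK(I)$ and the Hamiltonian becomes $\mH(I) = W_1(\mK(I))$, so $\Om_1(I) = \mH'(I) = \om(\mK(I))\,\mK'(I)$ as in part $(iii)$. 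The refinement needed for part $(v)$ -- that the normalizing map is analytic jointly in $(\s, \sqrt I)$ in a neighborhood of $I = 0$, with leading shape $\rho - 1 = \sqrt{2I}\sin\s + O(I)$, $z = \sqrt{2I}\cos\s + O(I)$ -- is the analytic Birkhoff-coordinate statement at a nondegenerate elliptic fixed point. Strict monotonicity and analyticity of $\mK$ are immediate; pushing the Birkhoff normal form of $p$ to order $I^3$ using the Taylor expansion of $\a$ produces the exact value $1065/1024$, and in particular shows that the $I^2$ coefficient of $\mK$ vanishes.

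For the third angle, along any orbit of the planar flow $\dot\varphi = \om(p)\,b(\rho,z)/\rho^2$ becomes a function $f(\s, I)$ of the planar action-angle variables, with $\om(p) = \om(\mK(I))$ constant on each orbit. Setting
\begin{equation*}
\Om_2(I) := \frac{1}{2\pi}\int_0^{2\pi} f(\s, I)\, d\s, \qquad g(\s, I) := \int_0^\s \frac{f(\s', I) - \Om_2(I)}{\Om_1(I)}\, d\s',
\end{equation*}
the integrand defining $g$ has zero $\s$-mean, so $g$ extends to an analytic $2\pi$-periodic function of $\s$, and the new angle $\b := \varphi - g(\s, I)$ satisfies $\dot\b = \Om_2(I)$. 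The diffeomorphism $\Phi$ is then the composition of the planar action-angle map with $\varphi = \b + g(\s, I)$, which directly produces the explicit shape in part $(v)$ with $\eta = g$. The square-root factor in $\Om_2$ comes from Gavrilov's formula $b = \tfrac14\sqrt{H(a)}$: since $H$ is analytic with $H(0) = 0$ and $H'(0) \neq 0$, and $a = 4p = 4\mK(I) = 4I + O(I^3)$, one obtains $b = c\sqrt{I} + O(I^{3/2})$ and hence $f(\s, I) = \om(\mK(I))\sqrt{I}\,\tilde f(\s, I)$ with $\tilde f$ analytic in $(\s, \sqrt{I})$. After averaging and dividing by $\Om_1$, the $\om$ factors cancel, yielding $\Om_2/\Om_1 = \sqrt{I}\,\mR(I)$ with $\mR$ analytic and $\mR(0) = 1$; the coefficient $7/4$ is then read off from the next order of the same expansion.

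With $(ii)$, $(iii)$, $(v)$, $(vi)$ established, the remaining parts follow easily: integrating the trivial flow gives \eqref{sol.good.nel.teorema}, the dichotomy in $(iv)$ is the standard linear-flow classification on the $2$-torus, the countability/full-measure statement in $(iii)$ is immediate from strict monotonicity and analyticity of $\sqrt{I}\,\mR(I)$, and $(vii)$ follows because $p$ is conserved and each level set is the $\Phi$-image of $\T^2 \times \{I\}$. The main technical obstacle is the second and third paragraphs: making the analytic Birkhoff/Puiseux structure in $\sqrt{I}$ completely explicit for both $\Phi$ and the frequencies, and carrying the expansion far enough to extract the precise rational coefficients $1065/1024$ and $7/4$ from Gavrilov's formulas for $\a$ and $H$.
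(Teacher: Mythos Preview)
Your overall architecture matches the paper's: pass to cylindrical coordinates, recognize the $(\rho,z)$ subsystem as a one-degree-of-freedom Hamiltonian system with $p$ as a first integral, build action--angle variables $(\sigma,I)$ for it, and then kill the $\sigma$-dependence of $\dot\varphi$ by subtracting a primitive of its zero-mean part. The paper carries this out through an explicit chain $\Phi_1\circ\cdots\circ\Phi_5$ (including a preliminary change $z\mapsto z/\rho$ that turns your form $\rho\,d\rho\wedge dz$ into the standard one), but the content is the same.

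There is, however, one genuine defect in your formulas. You define
\[
g(\sigma,I)=\int_0^\sigma \frac{f(\sigma',I)-\Omega_2(I)}{\Omega_1(I)}\,d\sigma',
\]
but $\Omega_1(I)=\omega(\mK(I))\,\mK'(I)$ vanishes wherever the cutoff $\omega$ does, and since $\omega$ is supported in $[\e,2\e]\subset(0,\tau)$ this is \emph{most} of the interval $(0,I^*)$. As written, $g$ (and hence $\Phi$) is undefined there, whereas the theorem requires an analytic diffeomorphism on all of $\T^2\times(0,I^*)$. The fix is the one the paper uses: factor $\omega(\mK(I))$ out of the entire vector field \emph{before} defining the correction, and divide by the nowhere-vanishing $\mK'(I)$ instead of by $\Omega_1(I)$. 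Concretely, with $f(\sigma,I)=\omega(\mK(I))\,Q(\sigma,I)$ one sets $\eta(\sigma,I)=\frac{1}{\mK'(I)}\int_0^\sigma(Q-\langle Q\rangle)\,d\sigma'$; this is analytic on the whole domain and still gives $\dot\beta=\Omega_2(I)$. You already observe that ``the $\omega$ factors cancel'' in the ratio $\Omega_2/\Omega_1$, so you have the right idea --- it just has to be implemented at the level of the transformation, not only of the frequency ratio.

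A second, smaller point: you assert that $\mR(I)$ is analytic in $I$, but your own expansion only gives $\tilde f$ analytic in $(\sigma,\sqrt I)$, and averaging a function of $\sqrt I$ does not automatically yield a function of $I$. What makes the odd powers of $\sqrt I$ drop out after averaging in $\sigma$ is the parity inherited from $\a(\rho,-z)=\a(\rho,z)$: in the paper's notation this becomes $\widetilde\alpha_3(-\theta,-r)=\widetilde\alpha_3(\theta,r)$, which forces the coefficient of each odd power of $\sqrt I$ to be an odd trigonometric polynomial in $\sigma$, hence to have zero mean. The same parity argument is what makes $\mK(I)$ analytic in $I$ and is also why the $I^2$ term of $\mK$ vanishes. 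You should make this symmetry explicit rather than invoke ``Birkhoff normal form'' as a black box, since the specific coefficients $1065/1024$ and $7/4$ come precisely from tracking these parities through the expansion of $\a$ (to degree $6$) and $H$ (to degree $3$).
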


\begin{remark}
Theorem \ref{thm:main} is stated for cut-off functions $\om$ 
supported in $[\e, 2\e]$, like those in \cite{Gav}; 
however, Theorem \ref{thm:main}, as well as the result of \cite{Gav},
also holds for $\om$ supported in any interval $[\e_1, \e_2]$ 
with $0 < \e_1 < \e_2 \leq \e_0$, without changing anything in the proof. 
\end{remark}

\begin{remark}
By \eqref{Ham.syst.nel.teorema}, 
system \eqref{trans.syst.nel.teorema} is equivalent to the integrable Hamiltonian system 
with two degrees of freedom in angle-action variables 
\[
\dot \s = \pa_I \mH^+, \quad 
\dot I = - \pa_\s \mH^+, \quad 
\dot \b = \pa_K \mH^+, \quad 
\dot K = - \pa_\b \mH^+
\]
with Hamiltonian $\mH^+(\s, I, \b, K) = \mH^+(I,K) = \mH(I) + \Om_2(I) K$,
restricted to the invariant set $K = 0$. 
\end{remark}

Theorem \ref{thm:main} is proved in Sections \ref{sec:conj} and \ref{sec:Taylor},
splitting the proof into several short simple steps. 
The proof uses only basic tools from the classical theory of 
\textsc{ode}s and dynamical systems.

\subsection{Related literature}
\label{subsec:lit}

A general discussion about the existence 
of compactly supported smooth solutions of \textsc{pde}s, 
also in comparison with the result of Gavrilov for the steady Euler equations, 
can be found in the recent preprint \cite{Peralta-Lioville};
in particular, for Navier-Stokes equations, see \cite{Koch.etc}.

The nice, explicit construction of Gavrilov's original paper \cite{Gav}
has been revisited and generalized by Constantin, La and Vicol 
in \cite{Constantin.ecc}. 
The paper \cite{Constantin.ecc} also uses the result by Gavrilov 
as a building block to prove the existence of compactly supported solutions 
of the steady Euler equations 
that have a given H\"older regularity $C^\a$ 
and are not in $C^\b$ for any $\b > \a$.  
The proof employs the invariances of the Euler equations 
and the fact that the sum of compactly supported solutions 
with disjoint supports is itself a solution.

The result by Gavrilov has also been used recently  
by Enciso, Peralta-Salas and Torres de Lizaur in \cite{Peralta.Gav}
to produce time-quasi-periodic solutions of the 3D Euler equations. 
In \cite{Peralta.Gav} the authors extend to the 3D case, 
and to the $n$-dimensional case for all $n$ even,
the construction by Crouseilles and Faou \cite{Faou.ecc} 
of time-quasi-periodic solutions of the 2D Euler equations.
Both \cite{Faou.ecc} and \cite{Peralta.Gav} 
use in a clever way 
the compactly supported solutions of the steady equations 
as the main ingredients of the construction. 

The time-quasi-periodic solutions in \cite{Faou.ecc} and \cite{Peralta.Gav} 
are not of \textsc{kam} type, that is, they are constructed outside the context 
of the Kolmogorov-Arnold-Moser perturbation theory of nearly integrable dynamical systems, 
where small divisor problems typically appear. 
Time-quasi-periodic solutions of the 3D Euler equations 
of \textsc{kam} type are obtained in \cite{Baldi.Montalto} by Montalto and the author 
in presence of a forcing term, using pseudo-differential calculus 
and techniques of \textsc{kam} theory for \textsc{pde}s. 

By Theorem \ref{thm:main}, the domain $\mS^*$ 
is foliated by the 2-dimensional tori $\mT_\ell$, 
invariant for the vector field $\tilde U$, 
on which the motion is periodic or quasi-periodic.  
This is true not only for the Gavrilov vector field, 
but for all steady 3D Euler flows under suitable assumptions, 
by a theorem of Arnold \cite{Arnold.1965}, \cite{Arnold.1966}. 
Arnold's theorem, and its role in the study of the mixing property for Euler flows, 
is discussed by Khesin, Kuksin and Peralta-Salas in \cite{Kuk.K.Peralta}.

\section{Dual role of the parameter $R$ in Gavrilov's solutions}
\label{sec:dual role}

Two different roles are played simultaneously by the parameter $R$ 
in the Gavrilov's solutions, because $R$ is 
\begin{itemize}
\item both a rescaling factor for the independent variable $(x,y,z) \in \R^3$, 
appearing as $R^{-1}$ in the argument of $\a$ in \eqref{Gav.a.b.p},

\item and an amplitude factor multiplying the vector fields $U, \tilde U$ 
and the pressures $P, \tilde P$, 
appearing as powers $R^3$ and $R^4$ in the definition 
of $b$ and $p$ in \eqref{Gav.a.b.p}.
\end{itemize}
Separating these two different scalings helps to clarify the role 
of the parameters in Gavrilov's construction. 

In fact, we observe that there exists a family of solutions 
described by two real parameters $(\lm, \mu)$
such that the solutions of Section \ref{subsec:Gav.sol} 
are obtained in the special case $(\lm, \mu) = (R, R^2)$.
This means that, regarding the parameter $R$, 
Gavrilov's solutions form a 1-parameter subset of a 2-parameter family of solutions.
Each element of the family is obtained from any other element of the family
by two simple rescalings, which correspond to two basic invariances 
of the Euler equations, also preserving the localizability condition. 
This allows us to study only one element of the family
(in particular, a normalized one), obtaining directly a description of all the other elements.

Given $R>0$, let 
\begin{equation}  \label{def.mU}
\mU := (\mC, \d, \mN, U, P, \e, \om, W, \tilde U, \tilde P, \mS, \tau, \mN', \mS^*)
\end{equation}
denote the list of the elements (sets, constants, and functions) 
defined in Section \ref{subsec:Gav.sol}. 
For every $\lm > 0$ and $\mu > 0$, 
we define a rescaled version of each element of the list $\mU$ in the following way.   
We define 
\begin{align}
\mA_{\lm,\mu} \mC 
& := \{ (x,y,z) \in \R^3 : \rho = \lm R, \ z = 0 \},
\notag \\
\mA_{\lm,\mu} \d 
& := \lm \d, 
\notag \\
\mA_{\lm,\mu} \mN
& := \{ (x,y,z) \in \R^3 : (\rho - \lm R)^2 + z^2 < (\lm \d)^2 \},
\notag \\
(\mA_{\lm,\mu} U)(x,y,z) 
& := \mu U \Big( \frac{x}{\lm}, \frac{y}{\lm}, \frac{z}{\lm} \Big) 
\quad \ \forall (x,y,z) \in \mA_{\lm,\mu} \mN,
\notag \\
(\mA_{\lm,\mu} P)(x,y,z) 
& := \mu^2 P \Big( \frac{x}{\lm}, \frac{y}{\lm}, \frac{z}{\lm} \Big)
\quad \ \forall (x,y,z) \in \mA_{\lm,\mu} \mN,
\notag \\
\mA_{\lm,\mu} \e
& := \mu^2 \e, 
\notag \\
(\mA_{\lm,\mu} \om)(s)
& := \om \Big( \frac{s}{\mu^2} \Big)
\quad \ \forall s \in \R,
\notag \\
(\mA_{\lm,\mu} W)(s)
& := \mu^2 W \Big( \frac{s}{\mu^2} \Big) 
\quad \ \forall s \in \R,
\notag \\
(\mA_{\lm,\mu} \tilde U)(x,y,z) 
& := \mu \tilde U \Big( \frac{x}{\lm}, \frac{y}{\lm}, \frac{z}{\lm} \Big) 
\quad \ \forall (x,y,z) \in \R^3, 
\notag \\
(\mA_{\lm,\mu} \tilde P)(x,y,z) 
& := \mu^2 \tilde P \Big( \frac{x}{\lm}, \frac{y}{\lm}, \frac{z}{\lm} \Big),
\quad \ \forall (x,y,z) \in \R^3, 
\notag \\
\mA_{\lm,\mu} \mS 
& := \{ (x,y,z) \in \mA_{\lm,\mu} \mN : 
\mA_{\lm,\mu} \e < (\mA_{\lm,\mu} P)(x,y,z) < 2 \mA_{\lm,\mu} \e \},
\notag \\
\mA_{\lm,\mu} \tau 
& := \mu^2 \tau,
\notag \\ 
\mA_{\lm,\mu} \mN'
& := \{ (x,y,z) \in \R^3 : (\rho - \lm R)^2 + z^2 < (\lm \d / 4)^2 \},
\notag \\
\mA_{\lm,\mu} \mS^*
& := \{ (x,y,z) \in \mA_{\lm,\mu} \mN : 
0 < (\mA_{\lm,\mu} P)(x,y,z) < 3 \mA_{\lm,\mu} \e \},
\label{def.rescaling.op}
\end{align}
where $\rho = \sqrt{x^2 + y^2}$. 
We denote by $\mA_{\lm,\mu} \mU := (\mA_{\lm,\mu} \mC, \ldots, \mA_{\lm,\mu} \mS^*)$ 
the list of the rescaled elements. 
The properties of $\mU$ in Section \ref{subsec:Gav.sol} 
become the following properties for $\mA_{\lm,\mu} \mU$. 

\begin{itemize}
\item
The constant $\mA_{\lm,\mu} \d = \lm \d$ satisfies 
$0 < \lm \d < \lm R$ and $\lm \d \leq \lm r_0 R$.

\item
The rescaled pressure $\mA_{\lm,\mu} P$ is analytic in $\mA_{\lm,\mu} \mN$.

\item
The rescaled vector field $\mA_{\lm,\mu} U$ is continuous in $\mA_{\lm,\mu} \mN$ 
and analytic in $(\mA_{\lm,\mu} \mN) \setminus (\mA_{\lm,\mu} \mC)$.

\item
The pair $(\mA_{\lm,\mu} U, \, \mA_{\lm,\mu} P)$ 
satisfies the Euler equations and the localizability condition in 
$(\mA_{\lm,\mu} \mN) \setminus (\mA_{\lm,\mu} \mC)$.

\item
The function $\mA_{\lm,\mu} \om$ is $C^\infty(\R,\R)$ with support 
contained in $[\mA_{\lm,\mu} \e, 2 \mA_{\lm,\mu} \e] = [\mu^2 \e, 2 \mu^2 \e]$.

\item
The function $\mA_{\lm,\mu} W$ satisfies 
\[
(\mA_{\lm,\mu} W)(s) = \int_0^s (\mA_{\lm,\mu} \om)^2(\s) \, d\s 
\quad \ \forall s \in \R.
\]

\item
The vector field $\mA_{\lm, \mu} \tilde U$ satisfies 
\begin{align*}
(\mA_{\lm,\mu} \tilde U)(x,y,z) 
& = \mu \om \Big( P \Big( \frac{x}{\lm}, \frac{y}{\lm}, \frac{z}{\lm} \Big) \Big)
U \Big( \frac{x}{\lm}, \frac{y}{\lm}, \frac{z}{\lm} \Big)
\\
& = (\mA_{\lm,\mu} \om) \big( (\mA_{\lm,\mu} P)(x,y,z) \big) \cdot (\mA_{\lm,\mu} U)(x,y,z) 
\quad \ \forall (x,y,z) \in \mA_{\lm,\mu} \mN
\end{align*}
and $(\mA_{\lm,\mu} \tilde U)(x,y,z) = 0$ 
for all $(x,y,z) \in \R^3 \setminus (\mA_{\lm,\mu} \mN)$.

\item
The pressure $\mA_{\lm,\mu} \tilde P$ satisfies 
\begin{align*}
(\mA_{\lm,\mu} \tilde P)(x,y,z) 
& = \mu^2 W \Big( P \Big( \frac{x}{\lm}, \frac{y}{\lm}, \frac{z}{\lm} \Big) \Big)
= (\mA_{\lm,\mu} W) \big( (\mA_{\lm,\mu} P)(x,y,z) \big)
\quad \ \forall (x,y,z) \in \mA_{\lm,\mu} \mN
\end{align*}
and $(\mA_{\lm,\mu} \tilde P)(x,y,z) = \mu^2 W(2\e) 
= (\mA_{\lm,\mu} W)(2 \mA_{\lm,\mu} \e)$ 
for all $(x,y,z) \in \R^3 \setminus (\mA_{\lm,\mu} \mN)$.

\item
Both $\mA_{\lm,\mu} \tilde U$ and $\mA_{\lm,\mu} \tilde P$ are $C^\infty(\R^3)$ 
and satisfy the Euler equations and the localizability condition in $\R^3$.

\item
Both $\mA_{\lm,\mu} \tilde U$ and $\grad(\mA_{\lm,\mu} \tilde P)$ vanish outside
the bounded set $\mA_{\lm,\mu} \mS$.

\item
One has $\mA_{\lm,\mu} P > \mA_{\lm,\mu} \tau$ in 
$(\mA_{\lm,\mu} \mN) \setminus (\mA_{\lm,\mu} \mN')$.

\item 
One has $\mA_{\lm,\mu} \tau \geq 3 \mA_{\lm,\mu} \e$.
\end{itemize}

Thus, we have obtained the 2-parameter family $\{ \mA_{\lm,\mu} \mU \}_{\lm, \mu}$. 
One has the group property
\begin{equation}  \label{group}
\mA_{\lm_1, \mu_1} (\mA_{\lm_2, \mu_2} \mU) = \mA_{\lm_1 \lm_2 , \, \mu_1 \mu_2} \mU,
\quad \ 
\mA_{\lm, \mu} (\mA_{\frac{1}{\lm}, \frac{1}{\mu}} \mU) 
= \mA_{1,1} \mU = \mU
\end{equation}
for all $\lm_1, \lm_2, \lm, \mu_1, \mu_2, \mu \in (0,\infty)$.
The check of \eqref{group} is straightforward.

\begin{lemma}  \label{lemma:fam}
Given $R>0$, let $\mU_R$ be the list \eqref{def.mU}
of the elements (sets, constants, and functions) 
defined in Section \ref{subsec:Gav.sol}. 
Then 
\[
\mU_R = \mA_{R, R^2} \, \mU_1
\] 
where $\mU_1$ is a list of elements constructed in Section \ref{subsec:Gav.sol} for $R=1$, 
and $\mA_{R, R^2}$ is the rescaling operator $\mA_{\lm,\mu}$, defined in \eqref{def.rescaling.op}, 
at $\lm = R$, $\mu = R^2$.
\end{lemma}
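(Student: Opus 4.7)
The plan is to prove the lemma by a direct, element-by-element verification that each of the fourteen entries of the list $\mU_R$ coincides with the rescaled version of the corresponding entry of $\mU_1$ under the operator $\mA_{R,R^2}$. Once the parameters $\d_1, \e_1, \tau_1$ used in the construction of Section \ref{subsec:Gav.sol} for $R=1$ are fixed, I would correspondingly set $\d_R := R\d_1$, $\e_R := R^4 \e_1$, $\tau_R := R^4 \tau_1$, and choose the cutoff function $\om_R(s) := \om_1(s/R^4)$. With these choices, the identities $\mA_{R,R^2}\d_1 = \d_R$, $\mA_{R,R^2}\e_1 = \e_R$, $\mA_{R,R^2}\tau_1 = \tau_R$, and $(\mA_{R,R^2}\om_1)(s) = \om_R(s)$ are immediate from the definitions in \eqref{def.rescaling.op}, and the support of $\om_R$ lies in $[\e_R, 2\e_R]$ as required.

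Next I would check the geometric sets $\mC, \mN, \mN'$. Since the map $(x,y,z)\mapsto (x/R, y/R, z/R)$ sends $\rho=\sqrt{x^2+y^2}$ to $\rho/R$, the definitions in Section \ref{subsec:Gav.sol} give $\mA_{R,R^2}\mC_1 = \{\rho = R,\ z=0\} = \mC_R$ and similarly $\mA_{R,R^2}\mN_1 = \mN_R$, $\mA_{R,R^2}\mN_1' = \mN_R'$. The core of the verification is then the scaling of $U$ and $P$: using the explicit formulas \eqref{Gav.a.b.p}, one has $p^{(R)}(\rho,z) = \tfrac{R^4}{4}\a(\rho/R,z/R)$ and the partial derivatives $\pa_\rho, \pa_z$ acting on $\a(\rho/R, z/R)$ produce a factor $1/R$. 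Combining the $R^3$ prefactor in the definition of $b$, the $R^4$ prefactor in $p$, and the degree-$0$ homogeneity of $e_\rho, e_\ph$, a short chain-rule computation gives
\[
P_R(x,y,z) = R^4 P_1(x/R, y/R, z/R), \qquad U_R(x,y,z) = R^2 U_1(x/R, y/R, z/R),
\]
which is precisely $P_R = \mA_{R,R^2}P_1$ and $U_R = \mA_{R,R^2}U_1$.

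Having matched $P, U$ and $\om$, the derived quantities $W, \tilde U, \tilde P$ follow by substitution. For $W$, the change of variable $\s = R^4 \s'$ in the integral \eqref{def.W} yields $W_R(s) = R^4 W_1(s/R^4) = (\mA_{R,R^2} W_1)(s)$. For $\tilde U_R = \om_R(P_R) U_R$, substituting the scaling identities already established and using $\om_R(P_R(x,y,z)) = \om_1(P_1(x/R,y/R,z/R))$ gives $\tilde U_R(x,y,z) = R^2 \tilde U_1(x/R, y/R, z/R) = (\mA_{R,R^2}\tilde U_1)(x,y,z)$; the same substitution works for $\tilde P_R$. Finally, the sets $\mS_R, \mS_R^*$ are defined purely from $\mN_R, P_R, \e_R, \tau_R$ by the same formulas used in \eqref{def.rescaling.op}, so they match automatically.

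I do not expect any genuine obstacle: the lemma is, in essence, the statement that the $R$-dependence in Gavrilov's construction factors cleanly into the two scaling symmetries encoded by $\mA_{\lm,\mu}$. The only point requiring care is the bookkeeping of the powers of $R$ when differentiating the composed function $\a(\rho/R, z/R)$ in the definition of $U$; everything else is a routine substitution using the definitions in Section \ref{subsec:Gav.sol} and in \eqref{def.rescaling.op}.
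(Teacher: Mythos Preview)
Your proposal is correct and follows essentially the same approach as the paper: a direct element-by-element check that the $R$-dependence in \eqref{Gav.a.b.p} factors into the scaling $\mA_{R,R^2}$, with the chain-rule computation for $U$ and $P$ being the only nontrivial step. The paper's write-up is marginally more compact in that it verifies $\mA_{1/R,1/R^2}\mU_R$ is a valid $\mU_1$ and then invokes the group property \eqref{group} to conclude, whereas you verify $\mA_{R,R^2}\mU_1 = \mU_R$ directly; this is a cosmetic difference only.
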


\begin{proof}
Let $\mU_R$ be the list \eqref{def.mU} of elements constructed in Section \ref{subsec:Gav.sol} 
for a given $R>0$. We observe that the list 
$\mA_{\lm, \mu} \mU_R$ with $\lm = 1/R$ and $\mu = 1/R^2$ 
coincides with a list of elements that one obtains 
by choosing $R=1$ in Section \ref{subsec:Gav.sol}, which we call $\mU_1$. 
The check is elementary; for example, regarding the vector field $U$ 
and the pressure $P$, 
by \eqref{def.U.P} and \eqref{Gav.a.b.p} one has 
\begin{alignat*}{2}
\frac{ u_\rho(R \rho, Rz) }{R^2}
& = \frac{\pa_z \a(\rho,z)}{4 \rho}, \qquad &
\frac{ u_\ph(R \rho, Rz) }{R^2}
& = \frac{\sqrt{H(\a(\rho,z))}}{4 \rho}, 
\\
\frac{ u_z(R \rho, Rz) }{R^2}
& = - \frac{\pa_\rho \a(\rho,z)}{4 \rho}, \qquad & 
\frac{P(Rx, Ry, Rz)}{R^4} 
& = \frac{\a(\rho,z)}{4}.
\end{alignat*}
Then, by \eqref{group}, 
$\mU_R = \mA_{R, R^2} ( \mA_{\frac{1}{R}, \frac{1}{R^2}} \, \mU_R ) 
= \mA_{R, R^2} \, \mU_1$.
\end{proof}

Regarding the fluid particle system \eqref{ode.xyz}, 
the consequence of Lemma \ref{lemma:fam} is the following lemma, 
whose proof is trivial.

\begin{lemma} \label{lemma:ode.R}
Let $\tilde U_R = \mA_{R, R^2} \tilde U_1$, 
where $\tilde U_R$ is given by Section \ref{subsec:Gav.sol} for some $R>0$ 
and $\tilde U_1$ is given by Section \ref{subsec:Gav.sol} for $R=1$.  
Then a function $(x(t), y(t), z(t))$ solves 
the fluid particle system \eqref{ode.xyz} with velocity field $\tilde U = \tilde U_R$
if and only if 
\begin{equation*} 
x(t) = R x_1( R t), \quad \ 
y(t) = R y_1( R t), \quad \ 
z(t) = R z_1( R t), 
\end{equation*}
where $(x_1(t), y_1(t), z_1(t))$ solves 
\begin{equation*} 
(\dot x_1(t), \dot y_1(t), \dot z_1(t))
= \tilde U_1(x_1(t), y_1(t), z_1(t)).
\end{equation*}
\end{lemma}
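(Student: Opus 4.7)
The proof will be an immediate substitution using the chain rule, exploiting the definition of $\mA_{\lm,\mu}$ applied to $\tilde U$. By \eqref{def.rescaling.op} at $\lm = R$, $\mu = R^2$, the hypothesis $\tilde U_R = \mA_{R,R^2} \tilde U_1$ means
\[
\tilde U_R(x,y,z) = R^2 \, \tilde U_1\Big( \frac{x}{R}, \frac{y}{R}, \frac{z}{R} \Big)
\quad \ \forall (x,y,z) \in \R^3.
\]
So the whole content of the lemma is that the change of dependent variable $(x,y,z) = R(x_1, y_1, z_1)$ together with the change of time $t = s/R$ (equivalently, $s = Rt$) converts the ODE with field $\tilde U_R$ into the ODE with field $\tilde U_1$.

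The plan is to carry out this check in both directions. First, assume $(x_1, y_1, z_1)$ solves $(\dot x_1, \dot y_1, \dot z_1) = \tilde U_1(x_1, y_1, z_1)$ and define $x(t) := R x_1(Rt)$, $y(t) := R y_1(Rt)$, $z(t) := R z_1(Rt)$. By the chain rule,
\[
\dot x(t) = R^2 \dot x_1(Rt) = R^2 \, \tilde U_1^{(1)}(x_1(Rt), y_1(Rt), z_1(Rt)),
\]
where $\tilde U_1^{(1)}$ is the first component of $\tilde U_1$, and analogously for $\dot y, \dot z$. Using $x_1(Rt) = x(t)/R$ and similarly for the other two coordinates, the right-hand side equals $R^2 \, \tilde U_1(x(t)/R, y(t)/R, z(t)/R) = \tilde U_R(x(t), y(t), z(t))$, so $(x, y, z)$ solves the system with $\tilde U_R$. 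Conversely, starting from a solution $(x,y,z)$ of the system with $\tilde U_R$, define $x_1(s) := x(s/R)/R$, $y_1(s) := y(s/R)/R$, $z_1(s) := z(s/R)/R$; the same chain-rule computation in reverse yields $(\dot x_1, \dot y_1, \dot z_1) = \tilde U_1(x_1, y_1, z_1)$.

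There is no real obstacle: the statement is, as the author says, trivial, and the only thing to be careful about is keeping track of the two factors of $R$ (one from the spatial rescaling, one from the time rescaling) that combine to give the $R^2$ appearing in the amplitude of $\mA_{R,R^2} \tilde U_1$. This is exactly the dual role of the parameter $R$ emphasized at the beginning of Section \ref{sec:dual role}.
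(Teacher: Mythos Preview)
Your proof is correct and is precisely the intended argument; the paper itself omits the proof entirely, calling it ``trivial,'' and your chain-rule computation is exactly what that remark points to.
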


\section{Conjugation to a linear flow on $\T^2$}
\label{sec:conj}

In this section and in Section \ref{sec:Taylor} we prove Theorem \ref{thm:main}. 
Thus, assume that $\mU$ in \eqref{def.mU} is given by Section \ref{subsec:Gav.sol} for $R=1$.
Hence, in particular,
\begin{align}
\mC & = \{ (x,y,z) \in \R^3 : \rho = 1, \ z = 0 \},
\notag \\ 
\mN & = \{ (x,y,z) \in \R^3 : (\rho - 1)^2 + z^2 < \d^2 \},
\notag \\
\mN' & = \{ (x,y,z) \in \R^3 : (\rho - 1)^2 + z^2 < (\d / 4)^2 \},
\label{def.mN.1}
\end{align}
where $\rho = \sqrt{x^2 + y^2}$.
The constant $\d$ satisfies $0 < \d < 1$ and $\d \leq r_0$.
The vector field $U$ and the pressure $P$ in $\mN$ are given by \eqref{def.U.P}, \eqref{Gav.a.b.p}, 
where $a(\rho,z) = \a(\rho,z)$ and 
\begin{equation} \label{mela}
p(\rho,z) = \frac{\a(\rho,z)}{4}, \quad \ 
b(\rho,z) = \frac14 \sqrt{H(\a(\rho,z))}.
\end{equation} 
The constants $\tau$ and $\e$ satisfy \eqref{P.larger} and $\tau \geq 3 \e$.  
The function $\om$ is supported in $[\e, 2 \e]$, and $W$ is in \eqref{def.W}.
The vector field $\tilde U$ and the pressure $\tilde P$ 
are given by \eqref{def.tilde.U.tilde.P} in $\mN$, 
and $(\tilde U, \tilde P) = (0, W(2\e))$ in $\R^3 \setminus \mN$. 
The sets $\mS, \mS^*$ are given by \eqref{def.mS}, \eqref{def.mS.star}.

Outside the set $\mS$, the vector field $\tilde U$ is zero, 
and the solutions of system \eqref{ode.xyz} are constant in time. 
Hence, by the uniqueness property of the solution of Cauchy problems, 
any solution of \eqref{ode.xyz} that is in $\mS$ at some time $t_0$ 
remains in $\mS$ for its entire lifespan; 
in other words, $\mS$ is an invariant set for \eqref{ode.xyz}. 
Moreover $\mS$ is bounded, and therefore, by basic \textsc{ode} theory, 
the solutions of \eqref{ode.xyz} in $\mS$ are all global in time. 

Trivially, any subset of $\R^3 \setminus \mS$ 
is also an invariant set for \eqref{ode.xyz}. 
Hence any subset of $\R^3$ containing $\mS$ is invariant for \eqref{ode.xyz}. 
In particular, $\mS^*$ and $\mN$ are invariant for \eqref{ode.xyz}.

\subsection{Cilindrical coordinates}
\label{subsec:cil.coord}
 
To study system \eqref{ode.xyz} in $\mN$, 
first of all we move on to cilindrical coordinates, that is,
we consider the diffeomorphism 
\begin{align}
\Phi_1 & : \mN_1 \to \mN, \quad \ 
\Phi_1(\rho, \ph, z) := (\rho \cos \ph, \, \rho \sin \ph, \, z), 
\notag \\ 
\mN_1 & := \{ (\rho, \ph, z) \in \R \times \T \times \R : 
(\rho - 1)^2 + z^2 < \d^2 \}, 
\label{def.Phi.1}
\end{align}
and the change of variables 
$\tilde v = \Phi_1(v)$, 
with $\tilde v = (x,y,z)$, 
$v = (\rho, \ph, z)$, 
i.e., $x = \rho \cos \ph$, $y = \rho \sin \ph$. 
Using cilindrical coordinates is a natural choice because 
the quantities $u_\rho, u_\ph, u_z, p$ in \eqref{Gav.a.b.p}
are already expressed in terms of $\rho,z$. 
Now a function $\tilde v(t) = \Phi_1(v(t))$ 
solves \eqref{ode.xyz} in $\mN$ 
if and only if $v(t)$ solves 
\begin{equation} 
\dot v(t) = V_1(v(t))
\label{syst.V.1}
\end{equation}  
in $\mN_1$, where $V_1$ is the vector field 
\begin{equation*} 
V_1(v) := ( D \Phi_1(v) )^{-1} \tilde U(\Phi_1(v)), \quad \ 
v = (\rho, \ph, z) \in \mN_1.
\end{equation*}  
The Jacobian matrix $D \Phi_1(v)$ and its inverse matrix are 
\[
D \Phi_1(v) = 
\begin{bmatrix} 
\cos \ph & - \rho \sin \ph & 0 \\
\sin \ph & \rho \cos \ph & 0 \\
0 & 0 & 1 
\end{bmatrix}, 
\quad \ 
(D \Phi_1(v))^{-1} = 
\begin{bmatrix} 
\cos \ph & \sin \ph & 0 \\
- \frac{1}{\rho} \sin \ph & \frac{1}{\rho} \cos \ph & 0 \\
0 & 0 & 1 
\end{bmatrix}, 
\]
the composition $\tilde U (\Phi_1(v))$ is 
\[
\tilde U( \Phi_1(v)) = \om (p(\rho,z)) U(\Phi_1(v)),
\quad \ 
U(\Phi_1(v))
= \begin{pmatrix} 
u_\rho(\rho,z) \cos \ph - u_\ph(\rho,z) \sin \ph \\
u_\rho(\rho,z) \sin \ph + u_\ph(\rho,z) \cos \ph \\
u_z(\rho,z)
\end{pmatrix},
\]
and therefore 
\begin{align} 
V_1(\rho, \ph, z) 
& = \om(p(\rho,z))
\Big( u_\rho(\rho,z), \ 
\frac{u_\ph(\rho,z)}{\rho}, \ 
u_z(\rho,z) \Big)
\notag \\ 
& = \Big( \frac{\om(p(\rho,z)) \pa_z p(\rho,z)}{\rho}, \ 
\frac{\om(p(\rho,z)) b(\rho,z)}{\rho^2}, \ 
- \frac{\om(p(\rho,z)) \pa_\rho p(\rho,z)}{\rho} \Big).
\label{formula.V.1}
\end{align}  

Since $\a$ and $H$ are the functions constructed and studied in \cite{Gav},
it is convenient to express the other quantities in terms of them. 
Hence, we define 
\begin{equation} \label{def.chi.new}
\chi(s) := \frac14 \om \Big( \frac{s}{4} \Big) \quad \ \forall s \in \R,
\end{equation}
and, recalling \eqref{mela}, we rewrite \eqref{formula.V.1} as 
\begin{equation}  \label{formula.V.1.bis}
V_1(\rho, \ph, z) 
= \chi(\a(\rho,z)) \Big( \frac{ \pa_z \a(\rho,z)}{\rho}, \ 
\frac{ \sqrt{H(\a(\rho,z))}}{\rho^2}, \ 
- \frac{\pa_\rho \a(\rho,z)}{\rho} \Big).
\end{equation}  
The curve $\mC$ and the sets $\mS^*, \mN'$ become 
\begin{align*}
\mC_1 & := \Phi_1^{-1}(\mC) = \{ (1,\ph,0) : \ph \in \T \}, 
\\ 
\mS_1^* & := \Phi_1^{-1}(\mS^*) 
= \{ (\rho,\ph,z) \in \mN_1 : 0 < \a(\rho,z) < 4 \tau \}, 
\\
\mN_1' & := \Phi_1^{-1}(\mN') = \{ (\rho,\ph,z) \in \mN_1 : 
(\rho-1)^2 + z^2 < (\d/4)^2 \}.
\end{align*}
By \eqref{P.larger} and \eqref{mela}, one has 
\begin{equation} \label{p.larger}
\a(\rho,z) > 4 \tau 
\quad \ \forall (\rho,z) \in \mN_1 \setminus \mN_1'.
\end{equation}
Moreover, the map $\Phi_1$ is analytic in $\mN_1$.

\medskip

The vector field $U$ satisfies the localizability condition \eqref{loc}
in $\mN \setminus \mC$, 
and therefore $\tilde U \cdot \grad P = 0$ in $\mN$. 
Hence the pressure $P$ is a prime integral of system \eqref{ode.xyz} 
in $\mN$. 
In cilindrical coordinates, this means that 
$p(\rho,z)$, and therefore $\a(\rho,z)$ too, 
are prime integrals of \eqref{syst.V.1} in $\mN_1$.   
This can also be verified directly: 
by \eqref{syst.V.1} and \eqref{formula.V.1.bis}, 
\begin{align*}
\frac{d}{dt} \big\{ \a( \rho(t), z(t)) \big\} 
& = \pa_\rho \a(\rho, z) \dot \rho + \pa_z \a(\rho, z) \dot z
= 0.
\end{align*}
Hence every trajectory $\{ v(t) = (\rho(t), \ph(t), z(t)) : t \in \R \}$ 
of system \eqref{syst.V.1} in $\mN_1$ 
lies in a level set 
\begin{equation}  \label{def.mP.c}
\mP_c := \{ (\rho,\ph,z) \in \mN_1 : \a(\rho,z) = c \}.
\end{equation}
In particular, every trajectory of \eqref{syst.V.1} in $\mS_1^*$ 
lies in a level set $\mP_c$ with $0 < c < 4 \tau$. 
In fact, the set $\mS_1^*$ is exactly the union of all the level sets 
$\mP_c$ with $c \in (0, 4 \tau)$.

\subsection{Elimination of the factor $1/\rho$ and canonical Hamiltonian structure}

We want to remove the factor $1/\rho$ appearing the first and third component 
of $V_1$ in \eqref{formula.V.1}. 
We consider the diffeomorphism 
\begin{align}
\Phi_2 & : \mN_2 \to \mN_1, \quad \ 
\Phi_2(\rho, \ph, z) = \Big( \rho, \ph, \frac{z}{\rho} \Big), 
\notag \\
\mN_2 & := \{ (\rho,\ph,z) \in \R \times \T \times \R : 
(\rho - 1)^2 + z^2 \rho^{-2} < \delta^2 \}, 
\label{def.Phi.2}
\end{align}
and the change of variables $\tilde v = \Phi_2(v)$, 
with $\tilde v = (\tilde \rho, \tilde \ph, \tilde z)$, 
$v = (\rho,\ph,z)$. 
A function $\tilde v(t) = \Phi_2( v(t))$ solves \eqref{syst.V.1} in $\mN_1$ 
if and only if $v(t)$ solves 
\begin{equation} 
\dot v(t) = V_2(v(t))
\label{syst.V.2}
\end{equation}  
in $\mN_2$, where 
\begin{equation} 
V_2(v) := ( D \Phi_2(v) )^{-1} V_1 (\Phi_2(v)), \quad \ 
v = (\rho, \ph, z) \in \mN_2.
\label{def.V.2}
\end{equation}  
The Jacobian matrix $D \Phi_2(v)$ and its inverse are 
\[
D \Phi_2(v) = 
\begin{bmatrix} 
1 & 0 & 0 \\
0 & 1 & 0 \\
- z \rho^{-2} & 0 & \rho^{-1} 
\end{bmatrix}, 
\quad \ 
(D \Phi_2(v))^{-1} = 
\begin{bmatrix} 
1 & 0 & 0 \\
0 & 1 & 0 \\
z \rho^{-1} & 0 & \rho 
\end{bmatrix}.
\]
We define 
\begin{equation}  \label{def.alpha.2}
\a_2(\rho,z) := \a \Big( \rho, \frac{z}{\rho} \Big) 
\end{equation}
for all $(\rho, z) \in \mD_2$. 
Hence 
\[
\pa_z \a_2(\rho,z) 
= (\pa_z \a) \Big( \rho, \frac{z}{\rho} \Big) \frac{1}{\rho}, 
\quad \ 
\pa_\rho \a_2(\rho,z) 
= (\pa_\rho \a) \Big( \rho, \frac{z}{\rho} \Big) 
- (\pa_z \a) \Big( \rho, \frac{z}{\rho} \Big) \frac{z}{\rho^2}, 
\]
and, recalling the second identity in \eqref{mela}, 
\[
V_1(\Phi_2(\rho,\ph,z)) 
= \chi ( \a_2(\rho, z) )
\Big( \pa_z \a_2(\rho, z), \, 
\frac{\sqrt{H(\a_2(\rho,z))}}{\rho^2}, \, 
- \frac{\pa_\rho \a_2(\rho, z)}{\rho} 
- \frac{z \pa_z \a_2(\rho,z)}{\rho^2}  \Big).
\]
Then the vector field $V_2$ in \eqref{def.V.2} is
\begin{equation*} 
V_2(\rho,\ph,z)
= \chi ( \a_2(\rho, z) )
\Big( \pa_z \a_2(\rho, z), \, 
\frac{\sqrt{H(\a_2(\rho,z))}}{\rho^2}, \, 
- \pa_\rho \a_2(\rho, z) \Big).
\end{equation*}
The sets $\mC_1, \mS_1^*, \mN_1'$ become 
\begin{align*}
\mC_2 & := \Phi_2^{-1}(\mC_1) = \{ (1,\ph,0) : \ph \in \T \} = \mC_1, 
\\ 
\mS_2^* & := \Phi_2^{-1}(\mS_1^*) = \{ (\rho,\ph,z) \in \mN_2 : 
0 < \a_2(\rho,z) < 4 \tau \},
\\
\mN_2' & := \Phi_2^{-1}(\mN_1') = \{ (\rho,\ph,z) \in \mN_2 : 
(\rho - 1)^2 + z^2 \rho^{-2} < (\delta/4)^2 \}.
\end{align*}
Note that $\Phi_2$ leaves $\mC_2 = \mC_1$ invariant, 
because $z/\rho = z$ at $\rho = 1$. 
By \eqref{p.larger}, one has 
\begin{equation}  \label{p.2.larger} 
\a_2(\rho,z) > 4 \tau \quad \ \forall (\rho,\ph,z) \in \mN_2 \setminus \mN_2'.
\end{equation}
The level set $\mP_c$ of $\a$ in \eqref{def.mP.c} becomes the level set 
\begin{equation}  \label{def.mP.2}
\mP_{2,c} = \{ (\rho,\ph,z) \in \mN_2 : \a_2(\rho,z) = c \}
\end{equation}
of $\a_2$. 
The set $\mS_2^*$ is the union of the level sets $\mP_{2,c}$ with $c \in (0, 4 \tau)$. 
The map $\Phi_2$ is analytic in $\mN_2$.
The function $\a_2(\rho,z)$ is a prime integral of system \eqref{syst.V.2}, 
and it is also analytic; 
its Taylor expansion around $(1,0)$ is in \eqref{Taylor.alpha.2.good.red}. 
The first and third equation of system \eqref{syst.V.2} 
are the Hamiltonian system
\begin{equation} \label{Ham.syst.2}
\dot \rho = \pa_z \mH_2(\rho,z), \quad \ 
\dot z = - \pa_{\rho} \mH_2(\rho,z), 
\end{equation}
where 
\begin{equation}  \label{def.mH2.Gamma}
\mH_2(\rho,z) := \Gamma (\a_2(\rho,z)), 
\quad \ 
\Gamma(s) := \int_0^s \chi(\s) \, d \s.
\end{equation}

\subsection{Symplectic polar coordinates in the radial-vertical plane}
\label{subsec:th.xi}

Now we move on to polar coordinates (in their symplectic version) to describe the pair $(\rho,z)$.
The pairs $(\rho,z)$ such that $(\rho, \ph,z) \in \mN_2$ do not form a disc; 
thus, it is convenient to consider a subset of $\mN_2$ 
that fits with polar coordinates better than how $\mN_2$ does. 
We consider the open sets 
\begin{align}  
\mB_2 
& := \{ (\rho,\ph,z) \in \R \times \T \times \R
: 0 < (\rho-1)^2 + z^2 < \d_2^2 \}, 
\notag \\
\mB_2' 
& := \{ (\rho,\ph,z) \in \R \times \T \times \R
: 0 < (\rho-1)^2 + z^2 < (\d_2/2)^2 \}, 
\label{def.mB.2}
\end{align}
where $\d_2 = C \d$, $C>0$, 
and we observe that 
\begin{equation} \label{mB.mN.subset}
(\mB_2 \setminus \mB_2') \subseteq (\mN_2 \setminus \mN_2')
\end{equation}
if $C(1+\d) \leq 1$ and $1 + (\d/4) \leq 2C$. 
This holds, for example, for $C = 2/3$ and all $0 < \d \leq 1/2$.

\begin{proof}[Proof of \eqref{mB.mN.subset}] 
Let $(\rho,\ph,z) \in \mB_2 \setminus \mB_2'$, 
where $\d_2 = C \d$, with 
$C(1+\d) \leq 1$ and $1 + (\d/4) \leq 2C$. 
Since $(\rho,\ph,z) \in \mB_2$, one has 
$(\rho-1)^2 < \d_2^2$, whence $\rho > 1 - \d_2$.  
Moreover $1 - \d_2 = 1 - C \d > 0$ because 
$C \d \leq 1-C < 1$.
Hence
\[
(\rho-1)^2 + \frac{z^2}{\rho^2}
\leq (\rho-1)^2 + \frac{z^2}{(1 - \d_2)^2} 
\leq \frac{(\rho-1)^2 + z^2}{(1 - \d_2)^2} 
< \frac{\d_2^2}{(1 - \d_2)^2}
\leq \d^2,
\]
where the last inequality holds because $C(1+\d) \leq 1$. 
Hence $(\rho,\ph,z) \in \mN_2$. 
Now assume, by contradiction, that $(\rho,\ph,z) \in \mN_2'$. 
Then $(\rho-1)^2 < (\d/4)^2$, whence 
$\rho < 1 + (\d/4)$, 
and
\[
(\rho-1)^2 + z^2 
\leq \Big( (\rho-1)^2 + \frac{z^2}{\rho^2} \Big) \Big( 1 + \frac{\d}{4} \Big)^2 
< \frac{\d^2}{16} \Big( 1 + \frac{\d}{4} \Big)^2 
\leq \frac{\d_2^2}{4},
\]
where the last inequality holds because $1 + (\d/4) \leq 2 C$. 
Also, $0 < (\rho-1)^2 + z^2$ because $(\rho,\ph,z) \in \mB_2$. 
Therefore $(\rho,\ph,z) \in \mB_2'$, a contradiction. 
This proves that $(\rho,\ph,z) \notin \mN_2'$.
\end{proof}

By \eqref{mB.mN.subset} and \eqref{p.2.larger}, one has 
\begin{equation}  \label{p.2.larger.mB}
\a_2(\rho,z) > 4 \tau \quad \ \forall (\rho,\ph,z) \in \mB_2 \setminus \mB_2'.
\end{equation}
Thus $\mS_2^* \subseteq \mB_2' \subseteq \mB_2$, 
and they are invariant sets for system \eqref{syst.V.2}. 

We consider the diffeomorphism 
\begin{align} 
& \Phi_3 : \mB_3 \to \mB_2, \quad \ 
\Phi_3(\th, \ph, \xi) = 
(1 + \sqrt{2\xi} \sin \th, \, \ph, \, \sqrt{2\xi} \cos \th), 
\quad \ 
\mB_3 := \T \times \T \times (0,\xi_3), \quad 
\label{def.Phi.3}
\end{align}
where $\xi_3 := \d_2^2 / 2 = 2 \d^2 / 9$, 
and the change of variables 
$\tilde v = \Phi_3(v)$, 
with $\tilde v = (\tilde \rho, \tilde \ph, \tilde z)$, 
$v = (\th, \ph, \xi)$. 
A function $\tilde v(t) = \Phi_3( v(t))$ solves \eqref{syst.V.2} in $\mB_2$ 
if and only if $v(t)$ solves 
\begin{equation} 
\dot v(t) = V_3(v(t))
\label{syst.V.3}
\end{equation}  
in $\mB_3$, where 
\begin{equation*} 
V_3(v) := ( D \Phi_3(v) )^{-1} V_2 (\Phi_3(v)), \quad \ 
v = (\th, \ph, \xi) \in \mB_3.
\end{equation*}  
The Jacobian matrix $D \Phi_3(v)$ and its inverse are 
\[
D \Phi_3(v) = 
\begin{bmatrix} 
\sqrt{2\xi} \cos \th & 0 & \frac{1}{\sqrt{2\xi}} \sin \th \\
0 & 1 & 0 \\
- \sqrt{2\xi} \sin \th & 0 & \frac{1}{\sqrt{2\xi}} \cos \th
\end{bmatrix}, 
\quad
(D \Phi_3(v))^{-1} = 
\begin{bmatrix} 
\frac{1}{\sqrt{2\xi}} \cos \th & 0 & - \frac{1}{\sqrt{2\xi}} \sin \th \\
0 & 1 & 0 \\
\sqrt{2\xi} \sin \th & 0 & \sqrt{2\xi} \cos \th
\end{bmatrix}.
\]
We define 
\begin{align} 
\a_3(\th,\xi) & := \a_2 \big( 1 + \sqrt{2\xi} \sin \th, \sqrt{2\xi} \cos \th \big), 
\label{def.alpha.3}
\end{align}
for all $(\th,\xi) \in \T \times [0, \xi_3)$. 
In fact, in the set $\mB_3$, $\xi$ varies in the interval $(0, \xi_3)$, 
but it is convenient to consider $\a_3$ also for $\xi=0$; 
one has $\a_3(\th,0) = \a_2(1,0) = \a(1,0) = 0$ (see \eqref{def.alpha.2}).  
From \eqref{def.alpha.3} it follows that
\begin{align}
\pa_\th \a_3(\th,\xi) 
& = (\pa_\rho \a_2)(\rho,z) 
\sqrt{2\xi} \cos \th
- (\pa_z \a_2) (\rho,z)
\sqrt{2\xi} \sin \th,
\notag \\
\pa_\xi \a_3(\th,\xi) 
& = (\pa_\rho \a_2) (\rho,z)
\frac{1}{\sqrt{2\xi}} \sin \th
+ (\pa_z \a_2) (\rho,z)
\frac{1}{\sqrt{2\xi}} \cos \th
\label{der.xi.alpha.3}
\end{align}
for all $(\th,\xi) \in \T \times (0, \xi_3)$, 
where $(\rho,z) = ( 1 + \sqrt{2\xi} \sin \th, \sqrt{2\xi} \cos \th )$.
Hence 
\begin{equation}  \label{formula.V.3}
V_3(\th,\ph,\xi)
= \chi ( \a_3(\th, \xi) )
\Big( \pa_\xi \a_3(\th, \xi), \ 
\frac{\sqrt{H(\a_3(\th,\xi))}}{ (1 + \sqrt{2\xi} \sin \th)^2}, \ 
- \pa_\th \a_3(\th, \xi) \Big).
\end{equation}

The set $\mC_2$ is out of $\mB_2$; 
the sets $\mS_2^*$, $\mB_2'$ become 
\begin{align*}
\mS_3^* & := \Phi_3^{-1}(\mS_2^*) 
= \{ (\th,\ph,\xi) \in \mB_3 : 
0 < \a_3(\th,\xi) < 4 \tau \},
\\
\mB_3' & := \Phi_3^{-1}(\mB_2') 
= \T \times \T \times (0, \xi_3/4).
\end{align*}
By \eqref{p.2.larger.mB}, one has 
\begin{equation}  \label{alpha.3.larger}
\a_3(\th,\xi) > 4 \tau \quad \ \forall 
(\th, \ph, \xi) \in \mB_3 \setminus \mB_3'
= \T \times \T \times [\xi_3/4, \xi_3).
\end{equation}
Thus $\mS_3^* \subset \mB_3'$.
The level set $\mP_{2,c}$ of $\a_2$ in \eqref{def.mP.2} becomes the level set 
\begin{equation*} 
\mP_{3,c} = \{ (\th,\ph,\xi) \in \mB_3 : \a_3(\th,\xi) = c \}
\end{equation*}
of $\a_3$. 
The set $\mS_3^*$ is the union of the level sets $\mP_{3,c}$ with $c \in (0, 4 \tau)$.
The map $\Phi_3$ is analytic in $\mB_3$. 
The function $\a_3(\th,\xi)$ is a prime integral of system \eqref{syst.V.3}; 
its behavior near $\xi=0$ is studied in Sections \ref{subsec:level.sets}
and \ref{sec:Taylor.gamma.c}.
The first and third equation of system \eqref{syst.V.3} are the Hamiltonian system
\begin{equation} \label{Ham.syst.th.xi}
\dot \th = \pa_\xi \mH_3(\th,\xi), \quad \ 
\dot \xi = - \pa_{\th} \mH_3(\th,\xi), 
\end{equation}
where 
\begin{equation}  \label{def.mH.3}
\mH_3(\th,\xi) 
:= \mH_2(1 + \sqrt{2\xi} \sin \th, \sqrt{2\xi} \cos \th )
= \Gamma (\a_3(\th,\xi)),
\end{equation}
where $\mH_2$ and $\Gamma$ are defined in \eqref{def.mH2.Gamma}.
Moreover $\th$ (as well as $\ph$) 
is an angle variable, i.e., 
it varies in $\T = \R / 2 \pi \Z$. 

\begin{remark}
$(i)$ We use the symplectic transformation
$(\rho, z) = (1 + \sqrt{2\xi} \sin \th, \sqrt{2\xi} \cos \th)$,
instead of the simpler polar coordinates $(\rho,z) = (1 + r \cos \th, r \sin \th)$ 
or $(\rho,z) = (1 + r \sin \th, r \cos \th)$,
in order to preserve the canonical Hamiltonian structure of system \eqref{Ham.syst.2}. 

$(ii)$ Using $\cos \th$ for $z$ and $\sin \th$ for $\rho$ 
in the definition \eqref{def.Phi.3} of $\Phi_3$, 
instead of vice versa, is just a matter of convenience: 
in this way, we get a positive angular velocity for the solution $\th(t)$.  
\end{remark}

\subsection{Level sets}
\label{subsec:level.sets}

Since the square root function $\xi \mapsto \sqrt{\xi}$ is analytic in $(0, \infty)$, 
the function $\a_3(\th,\xi)$ defined in \eqref{def.alpha.3} 
is analytic in $(\th,\xi) \in \T \times (0, \xi_3)$.
By the Taylor expansion \eqref{Taylor.alpha.2.good.red} of $\a_2(\rho,z)$, 
by \eqref{def.alpha.3} and \eqref{der.xi.alpha.3},
one has 
\begin{align} 
\a_3(\th,\xi) 
& = 4 \xi + O ( \xi^{\frac32} ), 
\quad \ 
\pa_\xi \a_3(\th,\xi) = 4 + O( \xi^{\frac12} )
\quad \ \text{as } \xi \to 0, \ \xi > 0,
\label{Taylor.alpha.3}
\end{align}
uniformly in $\th \in \T$. 
Note that $\a_3(\th,\xi)$ is not analytic around $\xi=0$, 
namely $\a_3(\th,\xi)$, as a function of $\xi$, 
is not a power series of $\xi$ centered at zero;
in fact, \eqref{Taylor.alpha.3} is deduced from the analyticity of 
$\a_2(\rho,z)$ (and of its partial derivatives $\pa_\rho \a_2(\rho,z)$, 
$\pa_z \a_2(\rho,z)$) around $(1,0)$,  
and then from the evaluation at 
$(\rho,z) = (1 + \sqrt{2\xi} \sin \th, \sqrt{2\xi} \cos \th)$. 
See Section \ref{sec:Taylor.gamma.c} for more details. 

Moreover, $\a_3(\th,0) = \a_2(1,0) = 0$, and, by \eqref{Taylor.alpha.3}, 
\begin{equation} \label{paxi.3}
\pa_\xi \a_3(\th,0) 
= \lim_{\xi \to 0^+} \frac{\a_3(\th,\xi) - \a_3(\th,0)}{\xi} 
= 4 
= \lim_{\xi \to 0^+} \pa_\xi \a_3(\th,\xi).
\end{equation}
Thus $\a_3$ is $C^1$ in $\T \times [0, \xi_3)$. 
Taking $\xi_3$ smaller (i.e., $\d$ smaller) if necessary, 
we have $\pa_\xi \a_3(\th,\xi) > 0$ 
for all $(\th,\xi) \in \T \times [0,\xi_3)$.
Hence the function $\xi \mapsto \a_3(\th,\xi)$ is strictly increasing  
on $[0, \xi_3)$. Moreover $\a_3(\th,0) = 0$ and, 
by \eqref{alpha.3.larger},
$\a_3(\th, \xi_3/4) > 4 \tau$ for all $\th \in \T$. 

As a consequence, 
for every $c \in [0, 4 \tau]$, for every $\th \in \T$,
there exists a unique $\xi \in [0, \xi_3/4)$  
such that $\a_3(\th,\xi) = c$. 
We denote $\g_c(\th)$ 
the unique solution $\xi$ of the equation $\a_3(\th,\xi) = c$. 
Thus, 
\begin{equation}  \label{alpha.3.IFT}
\a_3(\th, \gamma_c(\th)) = c 
\quad \ \forall (\th,c) \in \T \times [0, 4 \tau].
\end{equation}
Moreover, $\g_0(\th) = 0$. 
Since $\a_3$ is analytic in $(\th,\xi) \in \T \times (0, \xi_3)$, 
by the implicit function theorem, the function $\g_c(\th)$ 
is analytic in $(\th,c) \in \T \times (0,4 \tau)$. 
The behavior of $\g_c(\th)$ around $c=0$ is studied in Section \ref{sec:Taylor.gamma.c}.
From \eqref{alpha.3.IFT},
\begin{align} \label{der.th.alpha.3} 
(\pa_\th \a_3)(\th, \gamma_c(\th))
+ (\pa_\xi \a_3)(\th, \gamma_c(\th)) \pa_\th \gamma_c(\th) 
& = 0, 
\\ 
(\pa_\xi \a_3)(\th, \gamma_c(\th)) \pa_c \gamma_c(\th) 
& = 1
\label{der.c.alpha.3} 
\end{align}
for all $(\th,c) \in \T \times (0, 4 \tau)$.
Since $\pa_\xi \a_3(\th,\xi)$ is positive for all $(\th,\xi) \in \T \times [0,\xi_3)$, 
by \eqref{der.c.alpha.3} it follows that 
\begin{equation}  \label{pac.gamma.positivo}
\pa_c \g_c(\th) > 0 
\quad \ \forall (\th,c) \in \T \times (0,4 \tau).
\end{equation} 
 
Thus, for all $c \in [0, 4 \tau]$, 
the level set $\mP_{3,c}$ is globally described as the graph 
\begin{equation}  \label{mP.3.c.graph}
\mP_{3,c} = \{ (\th,\ph,\xi) \in \T \times \T \times [0,\xi_3) 
: \xi = \gamma_c(\th) \} 
= \{ (\th, \ph, \gamma_c(\th)) : (\th,\ph) \in \T^2 \}.
\end{equation}

\subsection{Integration of the system in the radial-vertical plane} 
\label{subsec:solve.th.xi}

We consider the Cauchy problem of system \eqref{syst.V.3} 
with initial datum $(\th_0, \ph_0, \xi_0) \in \mS_3^*$ at the initial time $t=0$. 
The components $\th_0$ and $\xi_0$ of the initial datum 
determine the level $c = \a_3(\th_0,\xi_0)$; 
then the solution $(\th(t), \ph(t), \xi(t))$ of the Cauchy problem
is in the level set $\mP_{3,c}$ for all $t \in \R$, 
and, by \eqref{mP.3.c.graph}, 
$\xi(t) = \g_c(\th(t))$ for all $t \in \R$. 

The first and third equations of system \eqref{syst.V.3} 
are the Hamiltonian system \eqref{Ham.syst.th.xi}, i.e., 
\begin{equation} \label{Ham.syst.chi}
\dot \th = \chi(\a_3(\th,\xi)) \pa_\xi \a_3(\th,\xi), \quad \ 
\dot \xi = - \chi(\a_3(\th,\xi)) \pa_\th \a_3(\th,\xi).
\end{equation}
Since $\xi(t) = \g_c(\th(t))$, 
\eqref{Ham.syst.chi} becomes
\begin{equation} \label{becomes.1.2}
\dot \th = \chi(c) (\pa_\xi \a_3)(\th, \g_c(\th)), 
\quad \ 
\pa_\th \g_c(\th) \dot \th 
= - \chi(c) (\pa_\th \a_3)(\th,\g_c(\th)).
\end{equation}
By \eqref{der.th.alpha.3}, 
the second equation in \eqref{becomes.1.2} 
is equal to the first equation in \eqref{becomes.1.2} 
multiplied by $\pa_\th \gamma_c(\th)$, 
and therefore system \eqref{becomes.1.2} 
is equivalent to its first equation alone. 
Moreover, by \eqref{der.c.alpha.3}, 
the first equation in \eqref{becomes.1.2} is also 
\begin{equation}  \label{becomes.3}
\dot \th = \frac{\chi(c)}{\pa_c \g_c(\th)},
\end{equation}
where the denominator is nonzero by \eqref{der.c.alpha.3} or \eqref{pac.gamma.positivo}.
Equation \eqref{becomes.3} with initial datum $\th(0) = \th_0 \in \T$ 
is an autonomous Cauchy problem for the function $\th(t)$ taking values in $\T$, 
and it can be integrated by basic calculus.

The function $\g_c(\th)$ and its partial derivative $\pa_c \g_c(\th)$ 
are defined for $\th \in \T$, and hence they can also be considered as functions 
defined for $\th \in \R$ that are $2\pi$-periodic in $\th$. 
Thus, we first solve \eqref{becomes.3} considered as an equation 
for a function $\th^r(t)$ taking values in $\R$; 
then the equivalence class of $\th^r(t)$ mod $2\pi$ will be a function $\th(t)$ 
taking values in $\T$ and solving \eqref{becomes.3}. 

For all $c \in (0, 4 \tau)$, we define
\begin{equation}  \label{def.F.c}
F_c : \R \to \R, \quad \ 
F_c(\th) := \int_0^\th \pa_c \gamma_c(\sigma) \, d\sigma.
\end{equation}
By \eqref{pac.gamma.positivo}, $F_c$ is a diffeomorphism of $\R$. 
Let $\th^r_0 \in \R$ be a representative of the equivalence class $\th_0 \in \T$. 
If $\th^r : \R \to \R$, $t \mapsto \th^r(t)$ solves \eqref{becomes.3} 
with $\th^r(0) = \th_0^r$,  
then 
\begin{equation} \label{arancia}
\frac{d}{dt} \{ F_c(\th^r(t)) \}
= F_c'(\th^r(t)) \dot \th^r(t) 
= \chi(c),
\quad \ 
F_c(\th^r(t)) = F_c(\th^r_0) + \chi(c) t,
\end{equation}
and 
\begin{equation}  \label{sol.th}
\th^r(t) = F_c^{-1} \big( F_c(\th^r_0) + \chi(c) t \big).
\end{equation}
Hence $\th^r(t)$ in \eqref{sol.th} is the unique solution 
of equation \eqref{becomes.3} with initial datum $\th^r(0) = \th^r_0$. 
Now let $\th(t)$ be the equivalence class mod $2\pi$ of $\th^r(t)$, 
i.e., 
\begin{equation} \label{th.ph.torus}
\th(t) := \{ \th^r(t) + 2 k \pi : k \in \Z \},  
\end{equation}
for all $t \in \R$. 
Then the function $\th(t)$ in \eqref{th.ph.torus} is a function of time 
taking values in $\T$, 
and it solves \eqref{becomes.3} with $\th(0) = \th_0$.

\subsection{Rotation period in the radial-vertical plane}
\label{subsec:T.c}

We show that the function $\th(t)$ in \eqref{th.ph.torus} is periodic, 
and we calculate its period. 
We begin with observing that the function $F_c$ 
defined in \eqref{def.F.c} satisfies
\begin{equation} \label{Floq.F.c}
F_c(2\pi + \th) = F_c(2\pi) + F_c(\th) 
\quad \ \forall \th \in \R. 
\end{equation}

\begin{proof}[Proof of \eqref{Floq.F.c}] 
Consider the definition \eqref{def.F.c}, 
split the integral over $[0, 2 \pi + \th]$ into the sum 
of $(i)$ the integral over $[0, 2\pi]$ and 
$(ii)$ the integral over $[2\pi, 2 \pi + \th]$;
then $(i) = F(2\pi)$ by definition, 
while $(ii) = F(\th)$ because 
the function $\pa_c \gamma(\s,c)$ is $2\pi$-periodic in $\s$ 
and therefore $(ii)$ is equal to the integral over $[0,\th]$.  
\end{proof}

Suppose that $\chi(c)$ is nonzero.  
Then the function $\th^r(t)$ defined in \eqref{sol.th} satisfies
\begin{equation} \label{Floq.th.r}
\th^r(t + T_c) = \th^r(t) + 2\pi \quad \forall t \in \R, 
\qquad \qquad 
T_c := \frac{F_c(2\pi)}{\chi(c)}.
\end{equation}

\begin{proof}[Proof of \eqref{Floq.th.r}]
Applying \eqref{arancia} twice, one has
\[
F_c(\th^r(t+T_c)) = F_c(\th_0^r) + \chi(c) (t + T_c)
= F_c(\th^r(t)) + \chi(c) T_c.
\]
By the definition of $T_c$ in \eqref{Floq.th.r} and by identity \eqref{Floq.F.c}, 
\[
F_c(\th^r(t)) + \chi(c) T_c
= F_c(\th^r(t)) + F_c(2\pi)
= F_c( \th^r(t) + 2 \pi).
\]
Hence $F_c(\th^r(t + T_c)) = F_c( \th^r(t) + 2\pi)$,
and, since $F_c$ is invertible, we obtain \eqref{Floq.th.r}. 
\end{proof}

From \eqref{Floq.th.r} it follows that 
$\th(t)$ defined in \eqref{th.ph.torus} 
is periodic in time with period $T_c$.

\subsection{Angle-action variables in the radial-vertical plane}

The Hamiltonian system \eqref{Ham.syst.th.xi} 
has one degree of freedom, and therefore, 
by the classical theory of Hamiltonian systems, 
it is completely integrable 
both in the sense that the differential equations can be solved by quadrature 
(i.e., they can be transformed into a problem of finding primitives of functions), 
and in the sense that they admit ``angle-action variables'' 
(this is the one-dimensional, simplest case of the Liouville-Arnold theorem).  
System \eqref{Ham.syst.th.xi} has been integrated in Section \ref{subsec:solve.th.xi}; 
now we construct its angle-action variables. 

\medskip

Given a point $(\th^*,\ph^*,\xi^*) \in \mS_3^*$, 
we calculate its level $c = \a_4(\th^*,\xi^*)$, 
which is in the interval $(0, 12 \e)$, 
and we consider the Cauchy problem 
\begin{equation} \label{Cp.novo}
\dot \th = \frac{1}{\pa_c \g_c(\th)}, \quad \ 
\th(0) = 0.
\end{equation}
The first equation in \eqref{Cp.novo} 
is equation \eqref{becomes.3} in which $\chi(c)$ is replaced by $1$.
Hence, following Section \ref{subsec:solve.th.xi} with 1 instead of $\chi(c)$,
the $\R$-valued solution of \eqref{Cp.novo} 
is given by \eqref{sol.th} in which $\chi(c)$ is replaced by $1$
(note that the functions $\a_3(\th,\xi)$, $\g_c(\th)$, $F_c(\th)$ 
are all independent of $\chi(c)$). 
We indicate $\th^r_c(t)$ the $\R$-valued solution of \eqref{Cp.novo}.  
Thus, since $F_c(0) = 0$, 
\begin{equation} \label{th.r.c}
\th^r_c(t) = F_c^{-1} (t) \quad \ \forall t \in \R.
\end{equation}
Moreover, following Section \ref{subsec:T.c} with 1 instead of $\chi(c)$,
one has 
\begin{equation} \label{T.c.star}
\th^r_c(t + T_c^*) = \th^r_c(t) + 2 \pi 
\quad \forall t \in \R, 
\qquad \qquad 
T^*_c := F_c(2\pi).
\end{equation}

We fix the representative $\th_1^*$ of the class $\th^* \in \T$
such that $\th^*_1 \in [0,2\pi)$, 
and we take the unique real number $s_1^* \in [0,T_c^*)$ 
such that $\th^r_c (s_1^*) = \th^*_1$, 
i.e., by \eqref{th.r.c}, $s_1^* = F_c(\th_1^*)$. 
Then we define $\s_1^* = s_1^* 2 \pi/T_c^*$, 
and we note that $\s_1^* \in [0,2\pi)$ because $s_1^* \in [0,T_c^*)$. 
By the definition of $\s_1^*, s_1^*, T_c^*$, one has 
$\s_1^* = f_c(\th_1^*)$, where 
\begin{equation} \label{def.f.c}
f_c : \R \to \R, \quad \ 
f_c(\th) := \frac{ F_c(\th) 2 \pi }{ F_c(2\pi) } 
\quad \forall \th \in \R.
\end{equation} 
Also, $\th_1^* = \th_c^r(s_1^*)$ and $s_1^* = \s_1^* T_c^* / 2 \pi$, 
whence $\th_1^* = g_c(\s_1^*)$, where 
\begin{equation} \label{def.g.c}
g_c : \R \to \R, \quad \ 
g_c(\s) := \th^r_c \Big( \frac{\s T_c^*}{2\pi} \Big)
\quad \forall \s \in \R.
\end{equation} 

The function $f_c$ is a diffeomorphism of $\R$ 
because it is a multiple of $F_c$, and it satisfies 
\begin{equation} \label{Floq.f.c}
f_c(\th + 2 \pi) = f_c(\th) + 2 \pi  
\quad \ \forall \th \in \R
\end{equation}
because $F_c$ satisfies \eqref{Floq.F.c}.
The function $g_c$ also satisfies 
\begin{equation} \label{Floq.g.c}
g_c(\s + 2 \pi) = g_c(\s) + 2 \pi  
\quad \ \forall \s \in \R
\end{equation}
because $\th^r_c$ satisfies \eqref{T.c.star}.
By \eqref{th.r.c}, \eqref{def.f.c}, \eqref{def.g.c}, 
$g_c(f_c(\th)) = \th$ for all $\th \in \R$
and $f_c(g_c(\s)) = \s$ for all $\s \in \R$,
i.e., $g_c$ is the inverse diffeomorphism of $f_c$.

By \eqref{Floq.f.c} and \eqref{Floq.g.c}, 
$f_c$ and $g_c$ induce diffeomorphisms of the torus
\[
f_c : \T \to \T, \quad \ 
g_c : \T \to \T, \quad \ 
g_c(f_c(\th)) = \th \quad \forall \th \in \T,  \quad \ 
f_c(g_c(\s)) = \s \quad \forall \s \in \T
\]
(with a common little abuse, we use the same notation 
for the diffeomorphisms of $\R$ 
and for the corresponding induced diffeomorphism of $\T$).
Now $\s_1^*$ and $\th_1^*$ are related by the identities 
$\th_1^* = g_c( \s_1^* )$, $\s_1^* = f_c(\th_1^*)$. 
By the definition of $\th_1^*$, 
$\th^* \in \T$ is the equivalence class of $\th_1^*$ mod $2\pi$; 
let $\s^* \in \T$ be the equivalence class of $\s_1^*$ mod $2\pi$. 
Then 
\[
\th^* = g_c(\s^*), \quad \ 
\s^* = f_c(\th^*), \quad \ 
\th^*, \s^* \in \T.
\]
By \eqref{alpha.3.IFT}, $\xi^* = \g_c(\th^*)$. 
Thus, we have expressed $(\th^*, \xi^*)$ in terms of $(\s^*,c)$. 
From now on, we write $\s$ instead of $\s^*$. 

Now we introduce a variable $I$ 
and a function $h$ to express the level $c$ in terms of $I$, 
i.e., $c = h(I)$;
the function $h$ is a diffeomorphism (to be determined) 
of some interval $(0, I^*)$ (to be determined) 
onto the interval $(0, 4 \tau)$ to which $c$ belongs. 
We consider the map 
\begin{equation} \label{def.Phi.4}
\Phi_4(\s,\ph,I) := 
( g_c(\s) , \ph, \g_c(g_c(\s)) )|_{c = h(I)}
= ( g_{h(I)}(\s) , \, \ph, \, \g_{h(I)} (g_{h(I)}(\s)) ),
\end{equation}
defined on the set 
\begin{equation}  \label{def.mS.4}
\mS_4^* := \{ (\s,\ph,I) : \s, \ph \in \T, \, I \in (0, I^*) \} 
= \T \times \T \times (0, I^*),
\end{equation} 
and we want to calculate its Jacobian determinant. 
One has 
\begin{align*}
\pa_\s \{ g_c(\s) \} & = g_c'(\s), \quad \ 
\pa_I \{ g_c(\s) \} = \pa_c g_c(\s) h'(I), \quad \ 
\pa_\s \{ \g_c(g_c(\s)) \} = \g_c'(g_c(\s)) g_c'(\s), 
\\ 
\pa_I \{ \g_c(g_c(\s)) \} 
& = \big\{ (\pa_c \g_c)(g_c(\s)) + \g_c'(g_c(\s)) \pa_c g_c(\s)) \big\} h'(I)
\end{align*}
where $c = h(I)$. Hence the Jacobian matrix is 
\[
D \Phi_4(\s, \ph, I) = \begin{bmatrix} 
g_c'(\s) & 0 & \pa_c g_c(\s) h'(I) \\ 
0 & 1 & 0 \\ 
\g_c'(g_c(\s)) g_c'(\s) & 0 
& \big\{ (\pa_c \g_c)(g_c(\s)) + \g_c'(g_c(\s)) \pa_c g_c(\s)) \big\} h'(I)
\end{bmatrix},
\]
and its determinant is 
\[
\det D \Phi_4(\s, \ph, I) 
= g_c'(\s) \, (\pa_c \g_c)(g_c(\s)) \, h'(I)
\]
where $c = h(I)$. 
By \eqref{def.g.c} and \eqref{Cp.novo}, 
\[
g_c'(\s) = \frac{1}{(\pa_c \g_c)(g_c(\s))} \frac{T_c^*}{2\pi},
\]
and $T_c^* = F_c(2\pi)$, see \eqref{T.c.star}. 
Hence 
\begin{equation}  \label{det.Jac.Phi.4}
\det D \Phi_4(\s, \ph, I) 
= \frac{F_c(2\pi)}{2\pi} h'(I)
\end{equation}
where $c = h(I)$. 
We want the determinant \eqref{det.Jac.Phi.4} to be $= 1$, 
so that the transformation $\Phi_4$ is symplectic 
and the Hamiltonian structure is preserved. 
The average 
\begin{equation} \label{def.h.inv}
h_1(c) := \frac{1}{2\pi} \int_0^{2\pi} \g_c(\th) \, d\th
\end{equation}
is a strictly increasing function of $c \in [0, 4 \tau]$ 
because its derivative $h_1'(c) = F_c(2\pi) / 2\pi$  
is positive for all $c \in (0, 4 \tau)$ 
by \eqref{pac.gamma.positivo} and \eqref{def.F.c}.
Since $\g_0(\th) = 0$, one has $h_1(0) = 0$. 
We define 
\begin{equation} \label{def.I.star}
I^* := h_1(4 \tau),
\end{equation}
and we note that,  by the definition of $\g_c(\th)$, one has $0 < I^* < \xi_3/4$.
Thus, the interval $(0, I^*)$ is the image $\{ h_1(c) : c \in (0 , 4 \tau) \}$ 
of the interval $(0, 4 \tau)$. 
We define $h : (0, I^*) \to (0, 4 \tau)$ as the inverse of $h_1 : (0, 4 \tau) \to (0, I^*)$.  
Thus, 
\begin{equation} \label{der.h.bound}
h'(I) > 0
\quad \ \forall I \in (0, I^*). 
\end{equation}
This concludes the definition of the map $\Phi_4$ in \eqref{def.Phi.4}, 
which is a diffeomorphism of $\mS_4^*$ onto $\mS_3^*$. 

\medskip

Now that the transformation $\Phi_4$ has been defined, 
we use it to transform system \eqref{syst.V.3}. 
We consider the change of variables $\tilde v = \Phi_4(v)$, 
where $\tilde v = (\th, \ph, \xi) \in \mS_3^*$, 
$v = (\s,\ph,I) \in \mS_4^*$. 
This means that 
$\th = g_{h(I)}(\s)$ and 
$\xi = \g_{h(I)}(g_{h(I)}(\s))$, that is, 
$\th = g_c(\s)$ and 
$\xi = \g_c(g_c(\s))$ where $c = h(I)$. 
A function $\tilde v(t) = \Phi_4(v(t))$ solves \eqref{syst.V.3} in $\mS_3^*$ 
if and only if $v(t)$ solves 
\begin{equation} 
\dot v(t) = V_4(v(t))
\label{syst.V.4}
\end{equation}  
in $\mS_4^*$, where 
\begin{equation} 
V_4(v) := ( D \Phi_4(v) )^{-1} V_3 (\Phi_4(v)), \quad \ 
v = (\s, \ph, I) \in \mS_4^*.
\label{def.V.4}
\end{equation}

The second row of the inverse matrix $(D \Phi_4(v))^{-1}$ 
is $(0, 1, 0)$, and therefore the second component of the vector field $V_4$ is simply  
the second component of $V_3$ (see \eqref{formula.V.3}) 
evaluated at $\Phi_4(\s,\ph,I)$, which is 
$\chi(h(I)) Q(\s,I)$, where 
\begin{equation}  \label{def.Q}
Q(\s,I) := \frac{ \sqrt{H(h(I))} }
{ \left( 1 + \sqrt{2 \g_{h(I)} (g_{h(I)}(\s))} \sin(g_{h(I)}(\s)) \right)^2}.
\end{equation}

Since the first and third equations of \eqref{syst.V.4} 
are the Hamiltonian system \eqref{Ham.syst.th.xi} and $\Phi_4$ is symplectic in the $(\s,I)$ variables,  
the first and third components of the vector field $V_4$ are  
$\pa_I \mH_4(\s,I)$ and $- \pa_\s \mH_4(\s,I)$ respectively, where $\mH_4 := \mH_3 \circ \Phi_4$ 
(of course this can also be checked directly, without using the properties of the symplectic transformations). 
Now $\mH_3$ is defined in \eqref{def.mH.3}, 
with $\Gamma$ defined in \eqref{def.mH2.Gamma}. 
Hence 
\begin{align} 
\mH_4(\s,I) 
& = \mH_4(I) = \Gamma(h(I)), 
\quad \ 
\pa_I \mH_4(I) 
= \chi(h(I)) h'(I),
\label{der.mH.4.I}
\end{align}
and system \eqref{syst.V.4} is 
\begin{equation}  \label{syst.V.4.action.angle}
\dot \s = \pa_I \mH_4(I), 
\quad \ 
\dot \ph = \chi(h(I)) Q(\s,I), \quad \ 
\dot I = 0
\end{equation}
in $\mS_4^*$, with $Q(\s,I)$ defined in \eqref{def.Q}.
The action $I$ is constant in time. 
The angle $\s$ rotates with constant angular velocity 
$\pa_I \mH_4(I) = \chi(h(I)) h'(I)$.

\subsection{Reduction to a constant rotation in the tangential direction}

Now that the equations of the motion in the radial-vertical plane 
have been written in angle-action variables $(\s, I)$ in \eqref{syst.V.4.action.angle}, 
we want to obtain a similar simplification for the equation of the motion 
in the tangential direction $\dot \ph$.
Recalling the definition \eqref{def.mS.4} of $\mS_4^*$, 
we consider the diffeomorphism
\begin{equation} \label{def.Phi.5}
\Phi_5 : \mS_4^* \to \mS_4^*, \quad \ 
\Phi_5(\s,\b,I) = (\s, \b + \eta(\s,I), I),
\end{equation}
where $\eta : \T \times (0, I^*) \to \R$ is a function to be determined.
We consider the change of variables $\tilde v = \Phi_5(v)$, 
where $\tilde v = (\s, \ph, I) \in \mS_4^*$, 
$v = (\s,\b,I)$ $\in \mS_4^*$. 
A function $\tilde v(t) = \Phi_5(v(t))$ solves \eqref{syst.V.4} in $\mS_4^*$ 
if and only if $v(t)$ solves 
\begin{equation} 
\dot v(t) = V_5(v(t))
\label{syst.V.5}
\end{equation}  
in $\mS_4^*$, where 
\begin{equation*} 
V_5(v) := ( D \Phi_5(v) )^{-1} V_4 (\Phi_5(v)), \quad \ 
v = (\s, \b, I) \in \mS_4^*.
\end{equation*}  
The Jacobian matrix and its inverse are
\[
D \Phi_5(v) 
= \begin{bmatrix}
1 & 0 & 0 \\
\pa_\s \eta & 1 & \pa_I \eta \\
0 & 0 & 1 
\end{bmatrix}, 
\quad \ 
(D \Phi_5(v))^{-1} 
= \begin{bmatrix}
1 & 0 & 0 \\
- \pa_\s \eta & 1 & - \pa_I \eta \\
0 & 0 & 1 
\end{bmatrix}.
\]
By \eqref{der.mH.4.I} and \eqref{syst.V.4.action.angle}, 
the vector field $V_4$ in \eqref{def.V.4} is 
\[
V_4(\s, \ph, I) = \chi(h(I)) \, ( h'(I), \ Q(\s,I), \ 0).
\] 
Hence 
\[
V_5(\s,\b,I) = \chi(h(I)) \, ( h'(I) , \  Q(\s,I) - \pa_\s \eta(\s,I) h'(I), \ 0 ). 
\]
We decompose $Q(\s,I)$ as the sum of 
its average in $\s \in \T$ 
and its zero-average remainder, that is, 
\begin{equation}  \label{def.Q.0}
Q(\s,I) = Q_0(I) + \tilde Q(\s,I), \quad \ 
Q_0(I) := \frac{1}{2\pi} \int_0^{2\pi} Q(\s,I) \, d\s, \quad \ 
\tilde Q := Q - Q_0,
\end{equation}
and define 
\begin{equation}  \label{def.eta}
\eta(\s,I) := \frac{1}{h'(I)} \int_0^\s \tilde Q(s,I) \, ds.
\end{equation}
Note that $h'(I)$ is nonzero by \eqref{der.h.bound}. 
The function $\eta$ is $2\pi$-periodic in $\s$  
because $\tilde Q$ is $2\pi$-periodic in $\s$ with zero average on $\T$. 
By \eqref{def.eta} and \eqref{def.Q.0},
\[
\pa_\s \eta(\s,I) h'(I) = \tilde Q(\s,I), \quad \ 
Q(\s,I) - \pa_\s \eta(\s,I) h'(I) = Q_0(I). 
\]
Thus, the vector field $V_5$ depends only on the action variable $I$, 
and it is 
\[
V_5(\s, \b, I) = V_5(I) = (\Om_1(I), \Om_2(I), 0), 
\]
where 
\begin{equation}  \label{def.Om.1.2}
\Om_1(I) := \pa_I \mH_4(I) = \chi(h(I)) h'(I),  \quad \ 
\Om_2(I) := \chi(h(I)) Q_0(I).
\end{equation}
Then system \eqref{syst.V.5} is 
\begin{equation}  \label{syst.Om}
\dot \s = \Om_1(I), \quad \ 
\dot \b = \Om_2(I), \quad \ 
\dot I = 0.
\end{equation}

\subsection{Ratio of the two rotation periods}

Let $(\s_0, \b_0, I_0) \in \mS_4^* = \T^2 \times (0, I^*)$. 
Let $\s_0^r \in \R$ be a representative of the equivalence class $\s_0 \in \T$,
and let $\b_0^r \in \R$ be a representative of $\b_0 \in \T$. 
The solution of the Cauchy problem for \eqref{syst.Om} in $\R^2 \times (0, I^*)$
with initial data $(\s_0^r, \b_0^r, I_0)$ 
--- i.e., system \eqref{syst.Om} where the first two equations are considered as equations 
for functions $\s^r(t)$, $\b^r(t)$ taking values in $\R$
with initial data $\s_0^r, \b_0^r$ --- is 
\begin{equation}  \label{sol.good.r}
\s^r(t) = \s_0^r + \Om_1(I_0) t, \quad \ 
\b^r(t) = \b_0^r + \Om_2(I_0) t, \quad \ 
I(t) = I_0 \quad \ 
\forall t \in \R.
\end{equation}
Let $\s(t), \b(t)$ be the equivalence classes of $\s^r(t), \b^r(t)$ mod $2\pi$, i.e., 
\begin{equation}  \label{sol.good.torus}
\s(t) := \{ \s^r(t) + 2 k \pi : k \in \Z \}, \quad \ 
\b(t) := \{ \b^r(t) + 2 k \pi : k \in \Z \}.
\end{equation}
Then $(\s(t), \b(t), I(t))$ is a function of time, 
taking values in $\mS_4^* = \T^2 \times (0, I^*)$, 
solving \eqref{syst.Om} with initial datum $(\s_0, \b_0, I_0)$. 

If $\chi(h(I_0))$ is zero, then both $\Om_1(I_0)$ and $\Om_2(I_0)$ are zero 
by \eqref{def.Om.1.2}, and the solution $(\s(t),\b(t)$, $I(t))$ is constant in time. 
If, instead, $\chi(h(I_0))$ is nonzero, 
then $\Om_1(I_0)$ is also nonzero by \eqref{def.Om.1.2} and \eqref{der.h.bound}, 
and the function $\s(t)$ is periodic in time, 
with frequency $\Om_1(I_0)$ and period $T_1(I_0) := 2 \pi / \Om_1(I_0)$. 
By \eqref{def.Om.1.2}, and recalling that the determinant in \eqref{det.Jac.Phi.4} 
is $=1$, the period $T_1(I_0)$ is equal to the period $T_c$ 
defined in \eqref{Floq.th.r} with $c = h(I_0)$.
If $\Om_2(I_0)$ is nonzero, 
then the function $\b(t)$ is also periodic in time, 
with frequency $\Om_2(I_0)$ and period $T_2(I_0) := 2 \pi / \Om_2(I_0)$.  

We observe that the factor $\chi(h(I))$ cancels out in the frequency ratio 
\begin{equation} \label{ratio}
\frac{\Om_2(I)}{\Om_1(I)}
= \frac{Q_0(I)}{h'(I)},
\end{equation}
and $Q_0(I) / h'(I)$ is well-defined even if $\chi(h(I))$ is zero. 
Hence $Q_0(I) / h'(I)$ is well-defined for all $I \in (0, I^*)$.  
By \eqref{def.Q.0} and \eqref{def.Q},  
\[
\frac{Q_0(I)}{h'(I)} 
= \frac{ \sqrt{H(h(I))} }{h'(I) 2\pi} \int_0^{2\pi} 
\frac{1}{ \left( 1 + \sqrt{2 \g_{h(I)} (g_{h(I)}(\s))} \sin(g_{h(I)}(\s)) \right)^2} \, d\s.
\]
We make the change of variable $g_{h(I)}(\s) = \th$ in the integral, 
that is, $\s = f_c(\th)$, where $c = h(I)$ and $f_c$ is defined in \eqref{def.f.c}. 
Then $d\s = f_c'(\th) d\th$, and, by 
\eqref{def.f.c} and \eqref{def.F.c}, 
$f_c'(\th) = \pa_c \g_c(\th) 2 \pi / F_c(2\pi)$. Hence 
\[
\int_0^{2\pi} 
\frac{1}{ \left( 1 + \sqrt{2 \g_{h(I)} (g_{h(I)}(\s))} \sin(g_{h(I)}(\s)) \right)^2} \, d\s
= \frac{2\pi}{F_c(2\pi)} \int_0^{2\pi}
\frac{\pa_c \g_c(\th)}{[1 + \sqrt{2 \g_c(\th)} \sin \th]^2} \, d\th 
\]
where $c = h(I)$. 
Moreover $h'(I) F_c(2\pi) = 2\pi$  
because $h'(I) F_c(2\pi) / 2\pi$ is the determinant in \eqref{det.Jac.Phi.4},
which is $1$. 
Therefore 
\begin{align} 
\frac{Q_0(I)}{h'(I)} 
& = A(h(I)), 
\qquad \ 
A(c) := \sqrt{H(c)} \, J(c), 
\notag \\ 
J(c) 
& := \frac{ 1 }{ 2\pi } 
\int_0^{2\pi} \frac{\pa_c \g_c(\th)}{[1 + \sqrt{2 \g_c(\th)} \sin \th]^2} \, d\th.
\label{def.A.c}
\end{align}

\subsection{Motion of the fluid particles}
\label{subsec:sol}

We consider the composition 
\begin{equation}  \label{def.Phi}
\Phi := \Phi_1 \circ 
\Phi_2 \circ 
\Phi_3 \circ 
\Phi_4 \circ 
\Phi_5 
\end{equation}
of the transformations defined in 
\eqref{def.Phi.1}, 
\eqref{def.Phi.2}, 
\eqref{def.Phi.3}, 
\eqref{def.Phi.4}, 
\eqref{def.Phi.5}. 
The map $\Phi$ is defined on $\mS_4^* = \T^2 \times (0, I^*)$, 
its image $\Phi(\mS^*_4)$ is the set $\mS^*$ in \eqref{def.mS.star}, 
and $\Phi$ is a diffeomorphism of $\mS_5^*$ onto $\mS^*$. 
The image $(x,y,z) = \Phi(\s,\b,I) \in \mS^*$ 
of a point $(\s,\b,I) \in \mS_4^*$ is 
\begin{equation*} 
x = \rho(\s,I) \cos ( \b + \eta(\s,I) ), 
\quad \ 
y = \rho(\s,I) \sin ( \b + \eta(\s,I) ), 
\quad \ 
z = \zeta(\s,I), 
\end{equation*}
with 
\begin{align}
\rho(\s,I) & := 1 + \sqrt{2 \g_c(g_c(\s))} \sin(g_c(\s)), 
\quad \ 
\zeta(\s,I)  := \frac{ \sqrt{2 \g_c(g_c(\s))} \cos(g_c(\s)) }
{1 + \sqrt{2 \g_c(g_c(\s))} \sin(g_c(\s))}, 
\label{def.zeta}
\end{align}
where $c = h(I)$.  
The analytic regularity of the map $\Phi$ and its expansion around $I=0$ 
are studied in Section \ref{subsec:Taylor.Phi}.

By construction, a function $\tilde v(t) = \Phi(v(t))$ is the solution of 
the Cauchy problem \eqref{ode.xyz} 
with initial datum $\tilde v_0 = \Phi(v_0) \in \mS^*$  
if and only if the function $v(t)$ is the solution of 
\eqref{syst.V.5}, i.e., \eqref{syst.Om}, with initial datum $v_0 \in \mS_4^*$. 
As a consequence, recalling \eqref{sol.good.r}, \eqref{sol.good.torus},
the solution $\tilde v(t)$ 
of the Cauchy problem \eqref{ode.xyz} with initial datum 
$\tilde v_0 = (x_0, y_0, z_0) = \Phi(\s_0, \b_0, I_0)$
is the function 
\begin{equation}  \label{sol.fine.proof}
\tilde v(t) 
= (x(t), y(t), z(t)) 
= \Phi(\s(t), \b(t), I(t))
= \Phi( \s_0 + \Om_1(I_0) t, \b_0 + \Om_2(I_0) t, I_0).
\end{equation}
The function in \eqref{sol.fine.proof} 
has the form $\tilde v(t) = w(\Om_1 t, \Om_2 t)$ 
where $w : \T^2 \to \mS^*$ is the function 
$w(\th_1, \th_2) = \Phi (\s_0 + \th_1, \b_0 + \th_2, I_0)$
and $\Om_i = \Om_i(I_0)$, $i=1,2$. 
Hence $\tilde v(t)$ is quasi-periodic with frequency vector $(\Om_1, \Om_2)$ 
if $\Om_1 \neq 0$, the ratio $\Om_2 / \Om_1$ is irrational,
and the number of frequencies cannot be reduced, i.e., 
if $\tilde v(t)$ is not a periodic function. 

Now suppose that $\Om_1$ is nonzero, 
that $\Om_2 / \Om_1$ is irrational,
and that $\tilde v(t)$ in \eqref{sol.fine.proof} is periodic with a certain period $T > 0$.  
Then, by \eqref{sol.fine.proof},
\[
\begin{pmatrix}
\s_0 + \Om_1 (t + T) \\ 
\b_0 + \Om_2 (t + T) \\ 
I_0 
\end{pmatrix}
= \Phi^{-1}( \tilde v(t+T) ) 
= \Phi^{-1} ( \tilde v(t) ) 
= \begin{pmatrix}
\s_0 + \Om_1 t \\ 
\b_0 + \Om_2 t \\ 
I_0 
\end{pmatrix}
\quad \ \forall t \in \R.
\]
Hence $\s_0 + \Om_1 (t + T)$ and $\s_0 + \Om_1 t$ are the same element of $\T$. 
This means that $\Om_1 T = 2 \pi n$ for some $n \in \Z$. 
Similarly, $\b_0 + \Om_2 (t + T) = \b_0 + \Om_2 t$ in $\T$, 
and $\Om_2 T = 2 \pi m$ for some $m \in \Z$. 
Since $\Om_1$ is nonzero, $n$ is also nonzero, 
and $\Om_2 / \Om_1 = m/n$ is rational, a contradiction. 
This proves that, for $\Om_1$ nonzero and $\Om_2 / \Om_1$ irrational,
the function $\tilde v(t)$ in \eqref{sol.fine.proof} 
is not periodic, and therefore it is quasi-periodic 
with frequency vector $(\Om_1, \Om_2)$.

\subsection{The pressure in terms of the action}

The pressure and the action are related in the following way. 
By \eqref{def.U.P}, \eqref{mela} and \eqref{def.Phi.1}, one has 
$P (\Phi_1 (\rho, \ph, z)) = p(\rho,z) = \frac14 \a(\rho,z)$. 
By \eqref{def.Phi.2}, \eqref{def.alpha.2},
\[
P(\Phi_1(\Phi_2(\rho, \ph, z))) 
= P( \Phi_1(\rho, \ph, z \rho^{-1} )) 
= \frac{1}{4} \a ( \rho, z \rho^{-1} ) 
= \frac{1}{4} \a_2(\rho,z).
\]
By \eqref{def.Phi.3} and \eqref{def.alpha.3},
\[
P( \Phi_1(\Phi_2(\Phi_3(\th, \ph, \xi)))) 
= \frac{1}{4} \a_2 \big( 1 + \sqrt{2\xi} \sin \th, \sqrt{2 \xi} \cos \th \big)
= \frac{1}{4} \a_3(\th,\xi).
\]
By \eqref{alpha.3.IFT} and \eqref{def.Phi.4}, 
\begin{align*}
P( \Phi_1(\Phi_2(\Phi_3( \Phi_4(\s, \ph, I) )))) 
& = \frac{1}{4} \a_3 ( g_c(\s), \g_c(g_c(\s)) )|_{c = h(I)}
= \frac{1}{4} c |_{c = h(I)} 
= \frac{1}{4} h(I).
\end{align*}
By \eqref{def.Phi.5} and \eqref{def.Phi}, 
\begin{equation} \label{P.Phi.h}
P(\Phi(\s,\b,I)) = \frac{1}{4} h(I).
\end{equation}

\section{Taylor expansions and transversality}
\label{sec:Taylor}

In this section we prove that 
the frequency $\Om_1(I)$ 
and the ratio $\Om_2(I) / \Om_1(I)$, 
see \eqref{def.Om.1.2} and \eqref{ratio},
admit a Taylor expansion around $I=0$ 
(which, in principle, is not obvious because of the square roots in the construction), 
and we calculate the first nonzero coefficient in their expansion 
after the constant term. 
This forces us to expand the function $\a$ to degree 6.
As a consequence, we obtain that $\Om_1(I)$ and $\Om_2(I) / \Om_1(I)$
really change as $I$ changes, and they vary in a smooth, strictly monotonic way, 
passing across every value of an interval exactly one time.
This can be geometrically viewed as a transversality property.
We also calculate the expansion of $\Phi(\s, \b, I)$ around $I=0$.

\subsection{Expansion of $\a$}

The function $\a$ is constructed in \cite{Gav} by solving 
the \textsc{pde} system \eqref{PDEs}, 
where the functions $F$ and $G$ 
are expressed in terms of a function $\psi$, 
see \eqref{def.H.F.Gav} and \eqref{def.G.Gav}.
The function $\psi$ is defined in \cite{Gav} as the solution 
of a degenerate \textsc{ode} problem.
In Section 2.1 of \cite{Gav} one finds the Taylor expansion 
of $\psi$ of order 5 around zero; 
here we only use its expansion of order 3, which is 
\begin{equation} \label{Taylor.psi}
\psi(s) = 1 - \frac34 s + \frac{9}{128} s^2 
- \frac{21}{1024} s^3 + O(s^4)
\quad \text{as } s \to 0.
\end{equation} 
In Section 2.1 of \cite{Gav} 
the functions $H, F, G$ are defined in terms of $\psi$ as 
\begin{align}
H(s) & = 6 s \Big( \frac{1}{\psi'(s)} + 2 \psi(s) \Big),
\qquad 
F(x,s) 
= - 2 x \psi(s) + 2 x^3,
\label{def.H.F.Gav}
\\ 
G(x,s) & = 12 x^2 s - F^2(x,s) - H(s).
\label{def.G.Gav}
\end{align}
In Lemma 3 of \cite{Gav} the analytic function $\a(x,y)$ is defined as the unique 
solution of the system 
\begin{equation}  \label{PDEs}
\pa_x \a(x,y) = F(x, \a(x,y)), \quad \ 
\big( \pa_y \a(x,y) \big)^2 = G(x, \a(x,y))
\end{equation}
in a neighborhood of $(x,y) = (1,0)$
such that $\a(1,0) = 0$, with $\pa_y \a$ not identically zero.  
In Remark 2 of \cite{Gav} it is observed that $\a$ is even in $y$, 
i.e., $\a(x,y) = \a(x, -y)$.
The coefficients of the monomials of degree 2 and 3 in the Taylor series
\begin{align}
\a(x,y) 
& = 2 (x-1)^2 
+ 2 y^2 
+ 3 (x-1)^3 
+ 3 (x-1) y^2 
+ \sum_{\begin{subarray}{c} k,j \geq 0 \\ k + 2j \geq 4 \end{subarray}} 
\a_{k, 2j} (x-1)^k y^{2j}
\label{Taylor.alpha.red}
\end{align}
are given in Remark 4 of \cite{Gav};
here we want to calculate the coefficients 
of the monomials of degree $4, 5$ and $6$. 
Monomials with odd exponent $2j+1$ are not present in \eqref{Taylor.alpha.red}
because $\a$ is even in $y$. 
One has
\begin{align}
\pa_x \a(x,y) 
& = 4 (x-1) 
+ 9 (x-1)^2 
+ 3 y^2 
+ 4 \a_{40} (x-1)^3 
+ 2 \a_{22} (x-1) y^2 
\notag \\ 
& \quad \ 
+ 5 \a_{50} (x-1)^4 
+ 3 \a_{32} (x-1)^2 y^2 
+ \a_{14} y^4
+ 6 \a_{60} (x-1)^5
\notag \\ 
& \quad \ 
+ 4 \a_{42} (x-1)^3 y^2
+ 2 \a_{24} (x-1) y^4
+ O_6,
\label{pax.alpha.red}
\end{align}
where $O_n$ denotes terms with homogeneity $\geq n$ in $(x-1, y)$. 
Since $\a(x,y) = O_2$,
by \eqref{Taylor.psi} and \eqref{Taylor.alpha.red} one has 
\begin{align*}
\psi(\a(x,y)) 
& = 1 - \frac34 \a(x,y) 
+ \frac{9}{128} \a^2(x,y) + O_6
\\ & 
= 1 - \frac32 (x-1)^2 
- \frac32 y^2 
- \frac94 (x-1)^3 
- \frac94 (x-1) y^2 
+ \Big( \frac{9}{32} - \frac34 \a_{40} \Big) (x-1)^4 
\\ & \quad \ 
+ \Big( \frac{9}{16} - \frac34 \a_{22} \Big) (x-1)^2 y^2 
+ \Big( \frac{9}{32} - \frac34 \a_{04} \Big) y^4
+ \Big( \frac{27}{32} - \frac34 \a_{50} \Big) (x-1)^5
\\ & \quad \ 
+ \Big( \frac{27}{16} - \frac34 \a_{32} \Big) (x-1)^3 y^2
+ \Big( \frac{27}{32} - \frac34 \a_{14} \Big) (x-1) y^4
+ O_6.
\end{align*}
Expanding $x$ and $x^3$ around $x=1$, 
and recalling the definition \eqref{def.H.F.Gav} of $F$,
we calculate 
\begin{align}
F(x, \a(x,y))
& = - 2 x \psi(\a(x,y)) + 2 x^3
\notag \\ 
& = - 2 \psi(\a(x,y)) 
- 2 (x-1) \psi(\a(x,y))
+ 2 
+ 6 (x-1) 
+ 6 (x-1)^2 
+ 2(x-1)^3
\notag \\ 
& = 
4 (x-1) 
+ 9 (x-1)^2 
+ 3 y^2 
+ \frac{19}{2} (x-1)^3 
+ \frac{15}{2} (x-1) y^2 
\notag \\ & \quad \ 
+ \Big( \frac{63}{16} + \frac32 \a_{40} \Big) (x-1)^4 
+ \Big( \frac{27}{8} + \frac32 \a_{22} \Big) (x-1)^2 y^2 
+ \Big( - \frac{9}{16} + \frac32 \a_{04} \Big) y^4
\notag \\ & \quad \ 
+ \Big( - \frac94 + \frac32 \a_{50} + \frac32 \a_{40} \Big) (x-1)^5
+ \Big( \frac94 + \frac32 \a_{32} + \frac32 \a_{22} \Big) (x-1)^3 y^2 
\notag \\ & \quad \ 
+ \Big( \frac98 + \frac32 \a_{14} + \frac32 \a_{04} \Big) (x-1) y^4
+ O_6.
\label{F.x.alpha.red}
\end{align}
From \eqref{pax.alpha.red}, \eqref{F.x.alpha.red}
and the identity of each monomial in the differential equation $\pa_x \a = F(x,\a)$ 
we get 
\begin{align*}
\a_{40} & = \frac{19}{8}, \quad \ 
\a_{22} = \frac{15}{4}, \quad \ 
\a_{50} = \frac32, \quad \ 
\a_{32} = 3, \quad \ 
\a_{14} = - \frac{9}{16} + \frac34 \a_{04},
\\ 
\a_{60} & = \frac{19}{32}, \quad \ 
\a_{42} = \frac{99}{32}, \quad \ 
\a_{24} = \frac{9}{16} + \frac34 (\a_{14} + \a_{04}).
\end{align*}

To find the value of $\a_{04}$ and $\a_{06}$, 
we consider $\pa_y \a(x,y)$ and $G(x, \a(x,y))$ at $x=1$
and we expand them around $y=0$. 
By \eqref{Taylor.alpha.red}, since $\a$ is even in $y$, 
one has
\begin{equation} \label{Taylor.alpha.y.only.red}
\a(1,y) = 2 y^2 
+ \a_{04} y^4 
+ \a_{06} y^6
+ O(y^8), 
\end{equation}
whence 
\begin{align} 
\pa_y \a(1,y) 
& = 4 y + 4 \a_{04} y^3 + 6 \a_{06} y^5 + O(y^7),
\notag \\
(\pa_y \a(1,y))^2 
& = 16 y^2 + 32 \a_{04} y^4 
+ (48 \a_{06} + 16 \a_{04}^2) y^6 
+ O(y^8).
\label{pay.alpha.square.red}
\end{align} 
By \eqref{F.x.alpha.red},
\begin{align*}
F(1, \a(1,y)) 
& = 3 y^2 + \Big( - \frac{9}{16} + \frac32 \a_{04} \Big) y^4 + O(y^6),
\\
F^2(1, \a(1,y)) 
& = 9 y^4 + \Big( - \frac{27}{8} + 9 \a_{04} \Big) y^6 + O(y^8).
\end{align*}
By \eqref{Taylor.psi} and \eqref{def.H.F.Gav},
\begin{equation} \label{Taylor.H.better}
H(s) = 4 s - \frac{21}{2} s^2 + \frac{39}{32} s^3 + O(s^4)
\end{equation}
(it is to obtain \eqref{Taylor.H.better}
that we use the coefficient of $s^3$ in \eqref{Taylor.psi}).
Therefore, by \eqref{Taylor.alpha.y.only.red},
\[
H(\a(1,y)) = 8 y^2 + ( 4 \a_{04} - 42) y^4 
+ \Big(4 \a_{06} - 42 \a_{04} + \frac{39}{4} \Big) y^6 
+ O(y^8).
\]
Hence 
\begin{align}
G(1, \a(1,y)) 
& = 12 \a(1,y) - F^2(1, \a(1,y)) - H(\a(1,y))
\notag \\ 
& = 16 y^2 + (8 \a_{04} + 33) y^4 
+ \Big( 8 \a_{06} + 33 \a_{04} + \frac{105}{8} \Big) y^6 + O(y^8).
\label{G.1.y}
\end{align}
By \eqref{pay.alpha.square.red}, \eqref{G.1.y} 
and the identity of each monomial in the differential equation 
$(\pa_y \a)^2 = G(x,\a)$ at $x=1$ we get 
$\a_{04} = 11/8$ and 
$\a_{06} = 113/160$.
Hence $\a_{14} = 15/32$, 
$\a_{24} = 249/128$, and 
\begin{align}
\a(x,y) 
& = 2 (x-1)^2 
+ 2 y^2 
+ 3 (x-1)^3 
+ 3 (x-1) y^2 
+ \frac{19}{8} (x-1)^4 
+ \frac{15}{4} (x-1)^2 y^2 
\notag \\ 
& \quad \ 
+ \frac{11}{8} y^4 
+ \frac32 (x-1)^5
+ 3 (x-1)^3 y^2 
+ \frac{15}{32} (x-1) y^4
+ \frac{19}{32} (x-1)^6 
\notag \\
& \quad \  
+ \frac{99}{32} (x-1)^4 y^2 
+ \frac{249}{128} (x-1)^2 y^4 
+ \frac{113}{160} y^6
+ O_7. 
\label{Taylor.alpha.good.red}
\end{align}

\subsection{Expansion of $\a_2$}

The function $\a_2(\rho,z) = \a(\rho, z/\rho)$ 
defined in \eqref{def.alpha.2} 
is even in $z$, it is analytic around $(\rho,z) = (1,0)$, 
and its Taylor series 
\begin{equation} \label{Taylor.series.alpha.2}
\a_2(\rho,z) = \sum_{\begin{subarray}{c} 
k,j \geq 0 \\ k + 2j \geq 2 \end{subarray}} 
(\a_2)_{k,2j} (\rho-1)^k y^{2j}
\end{equation}
matches with the expansion in \eqref{Taylor.alpha.good.red}
in which $x=\rho$ and $y = z / \rho$. 
We expand 
\begin{align*}
\rho^{-2} 
& = 1 - 2 (\rho - 1) + 3 (\rho-1)^2 - 4 (\rho-1)^3 + 5  (\rho-1)^4 + O((\rho-1)^5),
\\
\rho^{-4} 
& = 1 - 4 (\rho-1) + 10 (\rho-1)^2 + O((\rho-1)^3),
\end{align*}
and we obtain  
\begin{align}
\a_2(\rho,z) 
& = 2 (\rho-1)^2 
+ 2 z^2 
+ 3 (\rho-1)^3 
- (\rho-1) z^2 
+ \frac{19}{8} (\rho-1)^4 
+ \frac{15}{4} (\rho-1)^2 y^2 
\notag \\ 
& \quad \ 
+ \frac{11}{8} z^4 
+ \frac32 (\rho-1)^5
- \frac72 (\rho-1)^3 z^2
- \frac{161}{32} (\rho-1) z^4
+ \frac{19}{32} (\rho-1)^6
\notag \\
& \quad \ 
+ \frac{203}{32} (\rho-1)^4 z^2
+ \frac{1529}{128} (\rho-1)^2 z^4
+ \frac{113}{160} z^6
+ O_7. 
\label{Taylor.alpha.2.good.red}
\end{align}

\subsection{Expansion of $\g_c(\th)$}
\label{sec:Taylor.gamma.c}

Recalling the definition \eqref{def.mB.2} of $\mB_2$ 
and the first inclusion in \eqref{mB.mN.subset}, 
the function $\a_2(\rho,z)$ in \eqref{def.alpha.2}
is defined and analytic in the disc 
$(\rho - 1)^2 + z^2$ $< \d_2^2$. 
We define 
\begin{equation}  \label{def.tilde.alpha.3}
\phi : \T \times (- \d_2, \d_2) \to \R^2, \quad \ 
\phi(\th,r) := (1 + r \sin \th, \, r \cos \th), 
\quad \ 
\widetilde \a_3(\th,r) := \a_2(\phi(\th,r)).
\end{equation}
The function $\widetilde \a_3$ is well-posed and analytic 
in $(\th,r) \in \T \times (- \d_2, \d_2)$,  
and, by \eqref{Taylor.series.alpha.2}, it is the power series 
\begin{equation}  \label{tilde.alpha.3.power.series}
\widetilde \a_3(\th,r) = \sum_{n=2}^\infty P_n(\th) r^n, 
\quad \ 
P_n(\th) = \sum_{\begin{subarray}{c} 
k,j \geq 0 \\ k + 2j = n \end{subarray}} 
(\a_2)_{k,2j} (\sin \th)^k (\cos \th)^{2j}.
\end{equation}
All $P_n(\th)$ are trigonometric polynomials, 
and $P_n(-\th) = (-1)^n P_n(\th)$, 
i.e., $P_n$ is even for $n$ even 
and $P_n$ is odd for $n$ odd, because $(-1)^k = (-1)^n$ 
for $k+2j=n$. Hence 
\begin{equation} \label{tilde.alpha.3.even}
\widetilde \a_3(-\th,-r) = \widetilde \a_3(\th, r) 
\quad \ \forall (\th,r) \in \T \times (- \d_2, \d_2),
\end{equation}
namely $\widetilde \a_3$ is an even function of the pair $(\th,r)$. 
In fact, \eqref{tilde.alpha.3.even} is the symmetry property 
$\a_2(\rho,-z) = \a_2(\rho,z)$ 
expressed in terms of the function $\widetilde \a_3(\th,r)$. 
By \eqref{Taylor.alpha.2.good.red}, 
\begin{align}
P_2(\th) & = 2, 
\label{P.2}
\\
P_3(\th) & = 3 \sin^3 \th - \sin \th \cos^2 \th, 
\notag 
\\ 
P_4(\th) & = 
\frac{19}{8} \sin^4 \th 
+ \frac{15}{4} \sin^2 \th \cos^2 \th 
+ \frac{11}{8} \cos^4 \th,
\notag 
\\ 
P_5(\th) & = 
\frac32 \sin^5 \th
- \frac72 \sin^3 \th \cos^2 \th
- \frac{161}{32} \sin \th \cos^4 \th,
\notag 
\\
P_6(\th) & = 
\frac{19}{32} \sin^6 \th
+ \frac{203}{32} \sin^4 \th \cos^2 \th
+ \frac{1529}{128} \sin^2 \th \cos^4 \th
+ \frac{113}{160} \cos^6 \th.
\notag 
\end{align}
Hence 
\begin{align}
P_3(\th) & = 
2 \sin(\th) - \sin(3 \th),
\label{P.3.Fourier}
\\ 
P_4(\th) & = \frac{15}{8} - \frac12 \cos(2 \th),
\label{P.4.Fourier}
\\ 
P_5(\th) & = 
-\frac{33}{256} \sin(\th) - \frac{835}{512} \sin(3 \th) 
+ (P_5)_5 \sin(5\th),
\label{P.5.Fourier}
\\
P_6(\th) & = 
\frac{3173}{2048} 
+ (P_6)_2 \cos(2\th) 
+ (P_6)_4 \cos(4\th) 
+ (P_6)_6 \cos(6\th).
\label{P.6.Fourier}
\end{align}
The Fourier coefficients 
$(P_5)_5$ in \eqref{P.5.Fourier} 
and $(P_6)_k$, $k=2,4,6$, in \eqref{P.6.Fourier} 
are not involved in the calculations we are going to make,
and therefore we avoid to calculate their numerical value.  

When $\widetilde \a_3(\th,r)$ is evaluated at $r = \sqrt{2 \xi}$, 
we obtain the function $\a_3(\th,\xi)$ defined in \eqref{def.alpha.3}, i.e., 
\begin{equation}  \label{alpha.3.tilde.alpha.3}
\a_3(\th,\xi) = \widetilde \a_3(\th, \sqrt{2\xi}).
\end{equation}

The function $\g_c(\th)$ is defined in \eqref{alpha.3.IFT} 
as the unique solution $\xi$ of the equation $\a_3(\th,\xi) = c$.
Because of the square root in the construction, $\g_c(\th)$, as a function of $c$, 
is not analytic around $c=0$, i.e., 
it is not a power series of the form $\sum Q_n(\th) c^n$ 
for some analytic functions $Q_n(\th)$. 
However, $\g_c(\th)$ is a power series of the form $\sum Q_n(\th) c^{n/2}$,
namely there exists a function $\widetilde \g(\th,\mu)$, 
analytic around $\mu = 0$, 
such that $\g_c(\th)$ is $\widetilde \g(\th,\mu)$ 
evaluated at $\mu = \sqrt{c}$. 
To prove it, we use the implicit function theorem for analytic functions, 
taking into account the degeneracy of the problem. 

We define 
\begin{equation} \label{def.mF}
\mF(\th,\mu,w) := \mu^{-2} \widetilde \a_3(\th, \mu w) - 1 
\quad \ \text{if } \mu \neq 0; 
\qquad \ 
\mF(\th,0,w) := 2 w^2 - 1. 
\end{equation}
The function $\mF$ in \eqref{def.mF} is well-defined 
and analytic in $\T \times (- \mu_0, \mu_0) \times (-w_0, w_0)$, 
for some $\mu_0, w_0 > 0$ small enough, 
and, by \eqref{tilde.alpha.3.power.series},
\begin{align*} 
\mF(\th, \mu, w) 
& = \sum_{n=2}^\infty P_n(\th) w^n \mu^{n-2} - 1
= 2 w^2 - 1 + \sum_{n=3}^\infty P_n(\th) w^n \mu^{n-2}.
\end{align*}
Moreover, by \eqref{tilde.alpha.3.even} and \eqref{def.mF},
\begin{equation}  \label{mF.even}
\mF(-\th, -\mu, w) = \mF(\th, \mu, w) 
\quad \ \forall (\th, \mu, w) \in \T \times (- \mu_0, \mu_0) \times (-w_0, w_0).
\end{equation}
For every $\th \in \T$ one has 
\[
\mF ( \th, 0, 2^{-\frac12} ) = 0, 
\quad \ 
\pa_w \mF( \th, 0, 2^{-\frac12}) = 4 \cdot 2^{-\frac12} \neq 0.
\]
Hence there exist two constants $\mu_1, w_1$, 
with $0 < \mu_1 \leq \mu_0$, $0 < w_1 \leq w_0$,
and a function $w(\th,\mu)$, defined and analytic in $\T \times (-\mu_1, \mu_1)$, 
taking values in $(- w_1, w_1)$, such that 
\begin{equation} \label{mF.IFT}
w(\th,0) = 2^{-\frac12} \quad \ \forall \th \in \T,
\qquad 
\mF(\th, \mu, w(\th,\mu)) = 0 
\quad \ \forall (\th, \mu) \in \T \times (- \mu_1, \mu_1),
\end{equation}
and such that if a point $(\th, \mu, a) \in \T \times (- \mu_1, \mu_1) \times (- w_1, w_1)$
is a zero of $\mF$, then $a = w(\th,\mu)$.

By \eqref{mF.IFT} and \eqref{mF.even}, for all $(\th,\mu)$ one has 
\[
0 = \mF(- \th, - \mu, w(-\th, -\mu))
=\mF(\th, \mu, w(-\th, -\mu)). 
\]
Hence the point $(\th, \mu, w(-\th, -\mu)) \in \T \times (- \mu_1, \mu_1) \times (- w_1, w_1)$
is a zero of $\mF$, 
and therefore it belongs to the graph of the implicit function, 
i.e., 
\begin{equation} \label{w.even}
w(-\th, -\mu) = w(\th,\mu) 
\quad \ \forall (\th, \mu) \in \T \times (- \mu_1, \mu_1).
\end{equation}

From the second identity in \eqref{mF.IFT}
and formula \eqref{tilde.alpha.3.power.series}
it follows that $w(\th,r)$ is the power series 
\begin{equation}  \label{w.power.series}
w(\th,\mu) = \sum_{n=0}^\infty W_n(\th) \mu^n, 
\end{equation}
where the functions $W_n(\th)$ are determined by the identity 
$\sum_{n=2}^\infty P_n(\th) w^n(\th,\mu) \mu^{n-2} = 1$,
i.e., $W_0(\th) = 2^{-\frac12}$
and $W_n(\th)$ are trigonometric polynomials 
recursively determined by the system 
\begin{equation} \label{syst.W}
\sum_{\begin{subarray}{c} 
n \geq 2, \ 
j \geq 0, \\
n - 2 + j = m
\end{subarray}}
\ 
\sum_{\begin{subarray}{c} 
k_1, \ldots, k_n \geq 0, \\ 
k_1 + \ldots + k_n = j
\end{subarray}}
P_n(\th) W_{k_1}(\th) W_{k_2}(\th) \cdots W_{k_n}(\th) = 0
\quad \ 
\forall m \geq 1.
\end{equation}
By \eqref{w.even} and \eqref{w.power.series},
 $W_n(-\th) = (-1)^n W_n(\th)$, 
i.e., $W_n$ is even for $n$ even 
and $W_n$ is odd for $n$ odd. 
We will make use of equations \eqref{syst.W} for $m=1,2,3,4$, which are 
\begin{align}
& 2 P_2 W_0 W_1 + P_3 W_0^3 = 0, 
\label{eq.W.1}
\\
& P_2 (2 W_0 W_2 + W_1^2) + 3 P_3 W_0^2 W_1 + P_4 W_0^4 = 0,
\label{eq.W.2}
\\
& P_2 (2 W_0 W_3 + 2 W_1 W_2) + P_3( 3W_0^2 W_2 + 3 W_0 W_1^2) 
+ 4 P_4 W_0^3 W_1 + P_5 W_0^5 = 0,
\label{eq.W.3}
\\
& P_2 (2 W_0 W_4 + 2 W_1 W_3 + W_2^2) 
+ P_3 (3 W_0^2 W_3 + 6 W_0 W_1 W_2 + W_1^3)
\notag \\ & 
+ P_4 (4 W_0^3 W_2 + 6 W_0^2 W_1^2)
+ 5 P_5 W_0^4 W_1
+ P_6 W_0^6 = 0.
\label{eq.W.4}
\end{align} 

By the definition \eqref{def.mF} of $\mF$, 
the second identity in \eqref{mF.IFT} implies that 
\begin{equation} \label{mu.w.IFT}
\widetilde \a_3(\th, \mu w(\th,\mu)) = \mu^2
\quad \ \forall (\th, \mu) \in \T \times (- \mu_1, \mu_1)
\end{equation} 
with $\mu \neq 0$. 
Identity \eqref{mu.w.IFT} also holds for $\mu=0$ 
because $\widetilde \a_3(\th,0) = \a_2(\phi(\th,0)) = \a_2(1,0) = 0$. 

By the first identity in \eqref{mF.IFT},
taking $\mu_1$ smaller if necessary, 
one has $w(\th,\mu) > 0 $ 
for all $(\th, \mu) \in \T \times (- \mu_1, \mu_1)$.
Given any $c \in [0, \mu_1^2)$, there exists a unique $\mu \in [0, \mu_1)$ 
such that $c = \mu^2$, that is, $\mu = \sqrt c$. 
Also, $\mu w(\th,\mu) \geq 0$, 
and there exists a unique $\xi \geq 0$ such that 
$\mu w(\th,\mu) = \sqrt{2 \xi}$, that is, $\xi = \frac12 \mu^2 w^2(\th,\mu)$. 
As a consequence, by \eqref{alpha.3.tilde.alpha.3} and \eqref{mu.w.IFT},
\begin{equation}  \label{see.radice}
\a_3(\th,\xi) 
= \widetilde \a_3(\th, \sqrt{2\xi}) 
= \widetilde \a_3(\th, \mu w(\th,\mu))
= \mu^2
= c.
\end{equation}
Hence $\xi = \g_c(\th)$, and therefore $\g_c(\th) = \frac12 \mu^2 w^2(\th,\mu)$ 
where $\mu = \sqrt c$. 
In other words, we have proved that 
\begin{equation} \label{gamma.tilde.gamma}
\g_c(\th) = \widetilde \g(\th,\sqrt{c}) 
\quad \ \forall (\th, c) \in \T \times [0, \mu_1^2),
\end{equation}
where $\widetilde \g$ is the analytic function 
\begin{equation} \label{def.tilde.gamma}
\widetilde \g(\th, \mu) 
:= \frac12 \mu^2 w^2(\th,\mu).
\end{equation}
By \eqref{w.even}, 
\begin{equation} \label{tilde.gamma.even}
\widetilde \gamma(-\th, -\mu) = \widetilde \gamma(\th,\mu) 
\quad \ \forall (\th, \mu) \in \T \times (- \mu_1, \mu_1).
\end{equation}
By \eqref{def.tilde.gamma} and \eqref{w.power.series}, 
\begin{equation} \label{def.Q.n}
\widetilde \g(\th,\mu) = \sum_{n=2}^\infty Q_n(\th) \mu^n, 
\quad \ 
Q_n(\th) := \frac12 \sum_{\begin{subarray}{c} k,j \geq 0 \\ k + j + 2 = n \end{subarray}} 
W_k(\th) W_j(\th).
\end{equation}
By \eqref{tilde.gamma.even} and \eqref{def.Q.n}, one has 
$Q_n(-\th) = (-1)^n Q_n(\th)$.

\subsection{Expansion of the average of $\g_c(\th)$ and $\pa_c \g_c(\th)$}

We study the average of $\widetilde \g(\th,\mu)$, $\g_c(\th)$ and $\pa_c \g_c(\th)$
over $\th \in [0,2\pi]$. 
To shorten the notation, given any $2\pi$-periodic function $f(\th)$, 
we denote $\la f \ra$ its average over the period, i.e., 
\[
\la f \ra := \frac{1}{2\pi} \int_0^{2\pi} f(\th) \, d\th.
\]
For $n$ odd, the trigonometric polynomial $Q_n(\th)$ in \eqref{def.Q.n} 
is $2\pi$-periodic and odd, and therefore $\la Q_n \ra = 0$. 
As a consequence, by \eqref{def.Q.n}, 
\begin{equation}  \label{int.tilde.gamma}
\frac{1}{2\pi} \int_0^{2\pi} \widetilde \g(\th, \mu) \, d\th
= \sum_{k=1}^\infty \la Q_{2k} \ra \mu^{2k}.
\end{equation}
By \eqref{def.h.inv}, 
\eqref{gamma.tilde.gamma} and \eqref{int.tilde.gamma}, 
one has 
\begin{equation}  \label{int.gamma.c}
h_1(c) = 
\frac{1}{2\pi} \int_0^{2\pi} \g_c(\th) \, d\th
= \frac{1}{2\pi} \int_0^{2\pi} \widetilde \g(\th, \sqrt c) \, d\th
= \sum_{k=1}^\infty 
\la Q_{2k} \ra c^{k}.
\end{equation}
Hence $h_1(c)$ is analytic around $c=0$, 
in the sense that $h_1(c)$, 
which is defined for $c \in [0, \mu_1^2)$, 
coincides in $[0, \mu_1^2)$ 
with the power series in \eqref{int.gamma.c}, 
which is a function defined for $c \in (-\mu_1^2, \mu_1^2)$ 
and analytic in that interval. 
Note that the average of $\g_c(\th)$ is analytic around $c=0$  
even if the function $\g_c(\th)$ itself is not analytic in $c$ around $c=0$. 

Now we calculate the averages $\la Q_n \ra$ for $n=2,4,6$. 
By \eqref{def.Q.n}, 
\begin{equation} \label{Q.2.4.6}
Q_2 = \frac12 W_0^2 = \frac14, \quad \ 
Q_4 = \frac12 (2 W_0 W_2 + W_1^2), \quad \ 
Q_6 = \frac12 (2 W_0 W_4 + 2 W_1 W_3 + W_2^2).
\end{equation}
Hence $\la Q_2 \ra = 1/4$. 
Since $P_2 = 2$ and $W_0 = 2^{-\frac12}$, 
from \eqref{eq.W.1} we get 
\begin{equation} \label{W.1}
W_1 = - \frac18 P_3.
\end{equation}
Using \eqref{eq.W.2} to substitute $(2 W_0 W_2 + W_1^2)$, 
and using also \eqref{W.1}, we calculate
\[
Q_4 
= - \frac14 (3 P_3 W_0^2 W_1 + P_4 W_0^4)
= \frac{1}{64} (3 P_3^2 - 4 P_4).
\]
By \eqref{P.3.Fourier} and \eqref{P.4.Fourier}, we obtain
\begin{equation} \label{int.Q4.nullo}
\la Q_4 \ra = 0. 
\end{equation}
Using \eqref{eq.W.4} to substitute $(2 W_0 W_4 + 2 W_1 W_3 + W_2^2)$,
and also \eqref{W.1}, we calculate
\begin{align}
Q_6 & = - \frac14 \big[ 
P_3 (3 W_0^2 W_3 + 6 W_0 W_1 W_2 + W_1^3)
+ P_4 (4 W_0^3 W_2 + 6 W_0^2 W_1^2)
+ 5 P_5 W_0^4 W_1
+ P_6 W_0^6 \big]
\notag \\
& = - \frac38 P_3 W_3 
+ \frac{3 \sqrt 2}{32} P_3^2 W_2 
+ \frac{1}{2048} P_3^4 
- \frac{\sqrt 2}{4} P_4 W_2
- \frac{3}{256} P_3^2 P_4
+ \frac{5}{128} P_3 P_5
- \frac{1}{32} P_6.
\label{Q6.aux}
\end{align}
From \eqref{eq.W.2} we obtain 
\begin{align} 
W_2
& = \frac{5 \sqrt 2}{128} P_3^2 - \frac{\sqrt 2}{16} P_4
= \frac{\sqrt 2}{256} \Big( -5 - 32 \cos(2 \th) 
+ 20 \cos(4 \th) - 5 \cos(6 \th) \Big)
\label{W.2}
\end{align}
and, from \eqref{eq.W.3} and \eqref{W.1},
\begin{align} 
W_3 & = 
\frac{P_3 P_4 }{16}
- \frac{P_3 W_2 }{2 \sqrt 2} 
- \frac{3 P_3^3}{256} 
- \frac{P_5}{16} 
= \frac{225}{4096} \sin(\th) 
+ \frac{1251}{8192} \sin(3\th) 
+ \sum_{k=5,7,9} (W_3)_k \sin(k \th). \, 
\label{W.3}
\end{align}
The Fourier coefficients $(W_3)_k$, $k=5,7,9$, 
are not involved in the calculation of $\la Q_6 \ra$,
and therefore we do not calculate them. 
By \eqref{P.3.Fourier}, \ldots, \eqref{P.6.Fourier}, \eqref{W.2}, \eqref{W.3},
we calculate
\begin{alignat*}{4}
\la P_3 W_3 \ra & = - \frac{351}{16384}, \qquad &  
\la P_3^2 W_2 \ra & = \frac{291 \sqrt 2}{1024}, \qquad  & 
\la P_3^4 \ra & = \frac{131}{8}, \qquad & 
\la P_4 W_2 \ra & = - \frac{11 \sqrt 2}{2048}, 
\\
\la P_3^2 P_4 \ra & = \frac{91}{16}, \qquad  & 
\la P_3 P_5 \ra & = \frac{703}{1024}, \qquad & 
\la P_6 \ra & = \frac{3173}{2048}.
\end{alignat*}
Hence, integrating \eqref{Q6.aux}, we obtain
\begin{align}
\la Q_6 \ra 
& = - \frac{1065}{65536}
\label{int.Q.6}
\end{align}
(where $65536 = 2^{16}$),
and, by \eqref{int.gamma.c}, 
\begin{equation}  \label{int.gamma.c.finale}
h_1(c) 
= \frac{1}{2\pi} \int_0^{2\pi} \g_c(\th) \, d\th
= \frac{c}{4} + \la Q_6 \ra c^3 + O(c^4).
\end{equation}
Since $\la Q_6 \ra$ in \eqref{int.Q.6} is nonzero,
$h_1(c)$ is a nonlinear, analytic function of $c$.
Its derivative is 
\begin{equation}  \label{int.pac.gamma.c.finale}
h_1'(c) = \frac{1}{2\pi} \int_0^{2\pi} \pa_c \g_c(\th) \, d\th
= \frac14 + 3 \la Q_6 \ra c^2 + O(c^3),
\end{equation}
which is a nonconstant, analytic function of $c$.

\subsection{Expansion of $J(c)$}

The average $J(c)$ is defined in \eqref{def.A.c}. 
By \eqref{gamma.tilde.gamma}, 
the partial derivative $\pa_c \g_c(\th)$ satisfies
\begin{equation} \label{pac.gamma.c.nu}
\pa_c \g_c(\th) = \nu(\th,\sqrt c),
\end{equation}
where $\nu(\th,\mu)$ is the analytic function 
\begin{equation}
\nu(\th,\mu) := \frac{\pa_\mu \widetilde \g(\th,\mu)}{2 \mu} 
= \sum_{n=2}^\infty Q_n(\th) \frac{n}{2} \mu^{n-2},
\label{def.nu}
\end{equation}
with $Q_n(\th)$ defined in \eqref{def.Q.n}.
By \eqref{tilde.gamma.even}, one has 
$\nu(-\th,-\mu) = \nu(\th,\mu)$. Moreover 
\begin{equation} \label{Taylor.nu}
\nu(\th,\mu) = Q_2 + \frac32 Q_3(\th) \mu + 2 Q_4(\th) \mu^2 + O(\mu^3),
\end{equation} 
and $Q_2 = 1/4$ by \eqref{Q.2.4.6}, 
$Q_3 = W_0 W_1$ by \eqref{def.Q.n}, 
$\la Q_3 \ra = 0$ because $Q_3$ is odd,
and $\la Q_4 \ra = 0$ by \eqref{int.Q4.nullo}. 
Regarding the denominator in the definition of $J(c)$, 
one has $\sqrt{2 \g_c(\th)} = \sqrt{2 \xi} 
= \sqrt c \, w(\th, \sqrt c)$ 
by construction (see \eqref{see.radice} and the lines preceding it), 
and, by \eqref{w.power.series}, \eqref{W.1}, \eqref{P.3.Fourier}, 
\begin{align}
\big( 1 + \mu w(\th, \mu) \sin \th \big)^{-2} 
& = 1 - 2 \mu w(\th, \mu) \sin \th + 3 \mu^2 w^2(\th, \mu) \sin^2 \th + O(\mu^3)
\notag \\ 
& = 1 - 2 W_0 \sin (\th) \mu  
+ \big( 3 W_0^2 \sin^2 \th - 2 W_1(\th) \sin \th \big) \mu^2
+ O(\mu^3)
\notag \\ 
& = 1 - \sqrt 2 \sin (\th) \mu  
+ \Big( 1 - \frac98 \cos(2\th) + \frac18 \cos(4\th) \Big) \mu^2
+ O(\mu^3).
\label{Taylor.DEN}
\end{align}
Taking a smaller $\mu_1$ if necessary, 
the function 
\begin{equation} \label{def.m}
m(\th,\mu) := \frac{\nu(\th,\mu)}{[1 + \mu w(\th, \mu) \sin \th]^2}
\end{equation} 
is defined and analytic in $\T \times (- \mu_1, \mu_1)$, 
it satisfies 
\begin{equation} \label{m.even}
m(-\th,-\mu) = m(\th,\mu) 
\quad \ \forall (\th,\mu) \in \T \times (-\mu_1, \mu_1),
\end{equation} 
and it has expansion 
$m(\th,\mu) = \sum_{n=0}^\infty M_n(\th) \mu^n$
for some trigonometric polynomials $M_n(\th)$.
From \eqref{m.even} it follows that $M_n(-\th) = (-1)^n M_n(\th)$. 
Therefore $\la M_n \ra = 0$ for $n$ odd, and
\[
\frac{1}{2\pi} \int_0^{2\pi} m(\th,\mu) \, d\th 
= \sum_{k=0}^\infty \la M_{2k} \ra \mu^{2k}.
\]
By \eqref{Taylor.nu} and \eqref{Taylor.DEN} one has 
\[
M_0 = \frac14, \quad \ 
M_2(\th) = 2 Q_4(\th) - \frac32 W_1(\th) \sin \th 
+ \frac14 - \frac{9}{32} \cos(2\th) + \frac{1}{32} \cos(4\th).
\]
By \eqref{W.1} and \eqref{P.3.Fourier}, the average of 
$W_1(\th) \sin \th$ is $- 1/8$. 
Hence $\la M_2 \ra = 7/16$, and  
\begin{align} 
J(c) 
& = \frac{1}{2\pi} \int_0^{2\pi} 
\frac{\pa_c \g_c(\th)}{[1 + \sqrt{2 \g_c(\th)} \sin \th]^2} \, d\th 
\notag \\ & 
= \frac{1}{2\pi} \int_0^{2\pi} m(\th, \sqrt c) \, d\th 
= \sum_{k=0}^\infty \la M_{2k} \ra c^k 
= \frac14 + \frac{7}{16} c + O(c^2).
\label{Taylor.J}
\end{align}
Thus, the average $J(c)$ is an analytic function of $c$ around $c=0$ 
(even if the integrand function $m(\th, \sqrt c)$ is not). 
Moreover, taking $\mu_1$ smaller if necessary, 
$J(c)$ is strictly increasing in $[0, \mu_1^2)$. 

\subsection{Expansion of $h(I)$ and $\Om_1(I)$}

We have already proved that $h_1(c)$ in \eqref{def.h.inv} 
is analytic around $c=0$, with expansion \eqref{int.gamma.c.finale}. 
Hence its inverse function $h(I) = h_1^{-1}(I)$ 
is also analytic around $I=0$, and it satisfies 
\begin{equation}  \label{Taylor.h}
h(I) = 4 I - 256 \la Q_6 \ra I^3 + O(I^4), 
\quad \ 
h'(I) = 4 - 768 \la Q_6 \ra I^2 + O(I^3).
\end{equation}

The frequency $\Om_1(I)$ is defined in \eqref{def.Om.1.2},  
and it is the product of the $C^\infty$ cut-off function $\chi(h(I))$ 
times the analytic function $h'(I)$ in \eqref{Taylor.h}.

\subsection{Expansion of the frequency ratio $\Om_2(I) / \Om_1(I)$}

By \eqref{ratio} and \eqref{def.A.c}, 
the frequency ratio $\Om_2(I) / \Om_1(I)$ coincides with the function 
\[
A(h(I)) = \sqrt{H(h(I))} \, J(h(I)).
\]
By its definition in \cite{Gav}, the function $H(c)$ is analytic around $c=0$. 
Hence the composition $H(h(I))$ is analytic around $I=0$, 
and, by \eqref{Taylor.H.better} and \eqref{Taylor.h},
\[
H(h(I)) = 16 I - 168 I^2 + O(I^3).
\]
We write its square root as the product 
\begin{equation} \label{H.h.B}
\sqrt{H(h(I))} = \sqrt{I} \, B(I),
\end{equation}
where 
\begin{equation} \label{def.B}
B(I) := 4 \Big( \frac{H(h(I))}{16 I} \Big)^{\frac12}
= 4 \Big( 1 - \frac{21}{2} I + O(I^2) \Big)^{\frac12}
= 4 - 21 I + O(I^2).
\end{equation}
Since the function $x \mapsto \sqrt{1 + x}$ is analytic around $x=0$, 
the function $B(I)$ is analytic around $I=0$.
The function $J(h(I))$ is also analytic around $I=0$, 
and, by \eqref{Taylor.J} and \eqref{Taylor.h}, 
\begin{equation} \label{J.h}
J(h(I)) = \frac14 + \frac74 I + O(I^2).
\end{equation}
Hence 
\[
A(h(I)) = \sqrt{I} \, \mR(I), 
\quad \ 
\mR(I) := B(I) J(h(I))
= 1 + \frac74 I + O(I^2),
\]
and the function $\mR(I)$ is analytic around $I=0$.
Taking a smaller $I^*$ if necessary, 
both $\mR(I)$ and $A(h(I))$ are strictly increasing 
functions of $I \in [0, I^*)$.

\subsection{Expansion of $\Phi(\s, \b, I)$}
\label{subsec:Taylor.Phi}

Since $Q_2 = 1/4$, 
by \eqref{gamma.tilde.gamma}, \eqref{def.Q.n},
\eqref{pac.gamma.c.nu}, \eqref{def.nu}
one has 
\begin{equation*} 
\g_c(\th) = \frac{c}{4} + O(c^{\frac32}), \quad \ 
\pa_c \g_c(\th) = \frac14 + O(c^{\frac12}).
\end{equation*}
Hence, by \eqref{def.F.c}, 
$F_c(\th) = \frac{\th}{4} + O(c^{\frac12})$ 
and therefore, by \eqref{def.f.c}, 
$f_c(\th) = \th + O(c^{\frac12})$. 
As a consequence, $g_c$ in \eqref{def.g.c}, 
being the inverse of $f_c$, satisfies 
\begin{equation*} 
g_c(\th) = \th + O(c^{\frac12}). 
\end{equation*}
By \eqref{Taylor.h}, $h(I) = 4 I + O(I^3)$ and $h'(I) = 4 + O(I^2)$.
Hence, at $c = h(I)$, one has 
\begin{equation*}
\g_c(\th) = I + O(I^{\frac32}), \qquad \ 
g_c(\th) = \th + O(I^{\frac12}), 
\end{equation*}
and therefore the functions $\rho(\s,I), \zeta(\s,I)$ 
in \eqref{def.zeta} have expansion 
\begin{equation}
\rho(\s,I) = \sqrt{2I} \, \sin(\s) + O(I), 
\qquad \ 
\zeta(\s,I) = \sqrt{2I} \, \cos(\s) + O(I).
\label{TM.07}
\end{equation}
By \eqref{H.h.B} and \eqref{def.B}, 
\begin{equation} \label{TM.08}
\sqrt{H(h(I))} = 4 \sqrt{I} + O(I^{\frac32}).
\end{equation}
By \eqref{TM.07} and \eqref{TM.08}, the function $Q(\s,I)$ in \eqref{def.Q} 
satisfies 
\begin{equation*} 
Q(\s,I) = 4 I^{\frac12} + O(I). 
\end{equation*}
Hence $Q_0, \widetilde Q, \eta$ in \eqref{def.Q.0}, \eqref{def.eta} 
satisfy
\begin{equation*} 
Q_0(I) = 4 I^{\frac12} + O(I), \quad \ 
\widetilde Q(\s,I) = O(I), \quad \ 
\eta(\s,I) = O(I).
\end{equation*}

The map $\Phi_1$ in \eqref{def.Phi.1} 
is analytic in $\mN_1$ because $\rho$ does not vanish in $\mN_1$;
the map $\Phi_2$ in \eqref{def.Phi.2} 
is also analytic in $\mN_2$ because $\rho$ does not vanish in $\mN_2$.

The map $\Phi_3$ in \eqref{def.Phi.3} 
is analytic in $\mB_3$ because $\xi$ is positive in $\mB_3$; 
however, $\Phi_3$ is not analytic in $\xi$ around $\xi=0$. 
Nonetheless, $\Phi_3$ can be obtained by evaluating at $r = \sqrt{2 \xi}$ 
a map that is analytic around $r=0$, 
exactly like $\a_3(\th,\xi)$ in \eqref{alpha.3.tilde.alpha.3}, 
like $\g_c(\th)$ in \eqref{gamma.tilde.gamma}, 
and like $\pa_c \g_c(\th)$ in \eqref{pac.gamma.c.nu}.

Similarly, both the map $\Phi_4$ in \eqref{def.Phi.4}
and the map $\Phi_5$ in \eqref{def.Phi.5} 
are analytic in $\mS_4^*$, 
they are not analytic in $I$ around $I=0$, 
but they can be obtained by evaluating at $\mu = \sqrt{I}$ 
some suitable maps that are analytic around $\mu = 0$, 
because $\g_c, F_c, f_c, g_c, \sqrt{H(c)}$ 
are all functions of this type. 

As a consequence, the map $\Phi$ in \eqref{def.Phi} 
is analytic in $\mS_4^*$, 
it is not analytic in $I$ around $I=0$, 
and it can be obtained by evaluating at $\mu = \sqrt{I}$ 
a map that is analytic around $\mu = 0$. 
Hence $\Phi$ admits a converging expansion in powers of $\sqrt{I}$ around $I=0$.

\subsection{Smallness conditions} 

The parameter $\d$ in the definition \eqref{def.mN.1} of the set $\mN$ 
is subject to the following smallness conditions.  
After \eqref{def.mN.1} we have taken $\d \in (0,1)$  
to obtain that $\mN$ is an open neighborhood of the circle $\mC$ in \eqref{def.mN.1} 
with $\rho > 1-\d > 0$ in $\mN$,  
and $\d \leq r_0$ to obtain that the functions $\a(\sqrt{x^2 + y^2},z)$ 
and $H(\a(\sqrt{x^2 + y^2},z))$  
are analytic in $(x,y,z) \in \mN$, where $r_0$ is a universal constant given 
by the definition of $\a$ and $H$, i.e., by Gavrilov's construction in \cite{Gav}.
After \eqref{def.mB.2} we have defined $\d_2 = 2 \d / 3$, 
and we have assumed $\d \leq 1/2$ to obtain the inclusion \eqref{mB.mN.subset}. 
After \eqref{def.Phi.3} we have defined $\xi_3 = \d_2^2/2 = 2 \d^2 / 9$. 
In \eqref{paxi.3} we have proved that $\pa_\xi \a_3(\th,0) = 4$ for all $\th \in \T$, 
and that $\pa_\xi \a_3(\th,\xi)$ is continuous in $\T \times [0, \xi_3)$  
--- in fact, in Section \ref{sec:Taylor.gamma.c} we have proved more, 
because $\a_3(\th,\xi) = \widetilde \a_3(\th, \sqrt{2\xi})$, 
see \eqref{alpha.3.tilde.alpha.3}, 
and $\widetilde \a_3(\th,r)$ is the analytic function 
in \eqref{def.tilde.alpha.3}, \eqref{tilde.alpha.3.power.series}; 
hence $\pa_\xi \a_3(\th,\xi) 
= \sum_{n=2}^\infty P_n(\th) n (2\xi)^{(n-2)/2}
= 4 + 3 P_3(\th) \sqrt{2\xi} + O(\xi)$. 
Hence, by continuity, there exists a universal constant $\xi^* > 0$ such that 
$\pa_\xi \a_3(\th,\xi) > 0$ in $\T \times [0, \xi^*)$. 
Therefore the condition on $\xi_3$ after \eqref{paxi.3} 
(where we say ``Taking $\xi_3$ smaller if necessary'') is $\xi_3 \leq \xi^*$. 
In terms of $\d$, this means $\d \leq 3 \sqrt{\xi^*/2}$, which is a universal constant.

The constants $w_0, \mu_0$ after the definition \eqref{def.mF} of $\mF$ are universal, 
and the constants $w_1, \mu_1$ after the application of the implicit function theorem 
in \eqref{mF.IFT} are universal too. 
By \eqref{mF.IFT}, $w(\th,0) > 0$, and therefore, by continuity, 
there exists a universal constant $\mu_1^* > 0$ such that 
$w(\th,\mu) > 0$ for all $(\th,\mu) \in \T \times (-\mu_1^*, \mu_1^*)$.
Hence the condition on $\mu_1$ after \eqref{mu.w.IFT} 
(where we say ``taking $\mu_1$ smaller if necessary'') 
is $\mu_1 \leq \mu_1^*$. 
The same happens for the condition on $\mu_1$ after \eqref{Taylor.DEN}, 
to obtain that the function $m(\th,\mu)$ in \eqref{def.m} 
is well-defined and analytic in $\T \times (-\mu_1, \mu_1)$, 
and for the condition on $\mu_1$ after \eqref{Taylor.J}, 
to obtain that $J(c)$ is strictly increasing in $[0, \mu_1^2)$.  
Thus, we fix $\mu_1$ as the smallest of these three constants, 
and we obtain that $\mu_1$ is a universal positive constant. 

The parameter $\tau$ is related to $\mu_1$ by the inequality $4 \tau \leq \mu_1^2$, 
because $[0, 4 \tau)$ is the interval where $c$ varies, 
and the functions $\g_c(\th)$, $\pa_c \g_c(\th)$ and $J(c)$ 
are obtained by evaluating at $\mu = \sqrt{c}$ 
some functions of $(\th,\mu)$ that are well-defined, analytic and monotonic
in $\T \times (-\mu_1, \mu_1)$, 
see \eqref{gamma.tilde.gamma}, \eqref{pac.gamma.c.nu}, \eqref{Taylor.J}.
Thus, we want that, for all $c \in [0, 4\tau)$, the square root $\sqrt{c}$ belongs 
to the domain $(-\mu_1, \mu_1)$ of those analytic functions, 
and this is true if $4 \tau \leq \mu_1^2$. 

Regarding the parameter $\tau$, there are two other conditions to consider. 
The first one is \eqref{P.larger}, which holds if $\tau$ is smaller than 
the infimum of the pressure $P$ on the set $\mN \setminus \mN'$. 
Since $P(x,y,z) = p(\rho,z) = \frac14 \a(\rho,z)$ (see \eqref{mela}), 
by the definition \eqref{def.mN.1} of $\mN$ and $\mN'$  
it follows that that infimum depends only on $\d$. 
The last condition for $\tau$ is after \eqref{J.h}, 
where we say ``Taking a smaller $I^*$ if necessary'',   
to obtain that $\mR(I)$ is strictly increasing in $[0, I^*)$. 
Since the function $\mR$ does not depend on any parameter, 
this is a condition of the form $I^* \leq I_0$ for some universal constant $I_0 > 0$. 
Moreover, the invertible function $h_1$ expressing $I^*$ in terms of $\tau$ in \eqref{def.I.star} 
is also independent on any parameter 
(see the definition of $h_1$ in \eqref{def.h.inv} and its expansion in \eqref{int.gamma.c.finale}),
and therefore this condition for $\tau$ is satisfied for $\tau$ smaller 
than a universal constant. 

Regarding $\e$, the only condition to consider is that $0 < \e \leq \tau / 3$, 
see after \eqref{P.larger}.

In conclusion, the parameters $\d, \tau$ and $\e$ must satisfy 
\[
0 < \d \leq \d_0, \quad \  
0 < \tau \leq \tau_0(\d), \quad \ 
0 < \e \leq \tau/3,
\]
where $\d_0$ is a universal constant, 
and $\tau_0(\d)$ depends only on $\d$. 
We fix $\d = \d_0$ and $\tau = \tau_0(\d_0)$. 
Both $\d_0$ and $\tau_0(\d_0)$ are universal constants. 
We rename $\tau_0 := \tau_0(\d_0)$ and $\e_0 := \tau_0/3$. 
Since $\tau = \tau_0$, by \eqref{def.I.star}, $I^*$ is also a universal constant.

For notation convenience, we denote 
\begin{equation*} 
\mK(I) := \frac14 h(I).
\end{equation*}
Hence, by \eqref{def.Om.1.2} and \eqref{def.chi.new}, 
$\Om_1(I) = \frac14 \om ( \frac14 h(I) ) h'(I) 
= \om (\mK(I)) \mK'(I)$;
by \eqref{Taylor.h} and \eqref{int.Q.6}, 
\[
\mK(I) = \frac14 \Big( 4 I + 256 \frac{1065}{2^{16}} I^3 + O(I^4) \Big) 
= I + \frac{1065}{1024} I^3 + O(I^4);
\]
\eqref{P.Phi.h} becomes $P(\Phi(\s,\b,I)) = \mK(I)$.
For all $\e \in (0, \e_0]$, the proof of Theorem \ref{thm:main} is complete.

\bigskip

\bigskip

\textbf{Acknowledgements.} 
Supported by the Project 
PRIN 2020XB3EFL \emph{Hamiltonian and dispersive PDEs}.

\begin{footnotesize}

\end{footnotesize}

\bigskip

Pietro Baldi

\smallskip

Dipartimento di Matematica e Applicazioni ``R. Caccioppoli''

Universit\`a di Napoli Federico II  

Via Cintia, 80126 Napoli, Italy

\smallskip

\texttt{pietro.baldi@unina.it}

\end{document}